\title{Vanishing Theorems on Toric Varieties\footnote{This paper was partially supported
by the National Natural Science Foundation of China (Grant No.\ 11422101).}}
\author{Qihong Xie}
\date{}
\theoremstyle{plain}
\newtheorem{prop}{Proposition}[section]
\newtheorem{lem}[prop]{Lemma}
\newtheorem{thm}[prop]{Theorem}
\newtheorem{cor}[prop]{Corollary}
\theoremstyle{definition}
\newtheorem{defn}[prop]{Definition}
\newtheorem*{ack}{Acknowledgments}
\newtheorem*{nota}{Notation}
\theoremstyle{remark}
\newtheorem{rem}[prop]{Remark}
\newtheorem{ex}[prop]{Example}
\newcommand{\Q}{\mathbb Q}
\newcommand{\R}{\mathbb R}
\newcommand{\Z}{\mathbb Z}
\newcommand{\PP}{\mathbb P}
\newcommand{\OO}{\mathcal O}
\newcommand{\HH}{\mathcal H}
\newcommand{\LL}{\mathcal L}
\newcommand{\GG}{\mathcal G}
\newcommand{\VV}{\mathcal V}
\newcommand{\FF}{\mathcal F}
\newcommand{\CC}{\mathcal C}
\newcommand{\UU}{\mathcal U}
\newcommand{\XX}{\mathcal X}
\newcommand{\YY}{\mathcal Y}
\newcommand{\WW}{\mathcal W}
\newcommand{\DD}{\mathcal D}
\newcommand{\Hom}{\mathop{\rm Hom}\nolimits}
\newcommand{\spec}{\mathop{\rm Spec}\nolimits}
\newcommand{\divisor}{\mathop{\rm div}\nolimits}
\newcommand{\codim}{\mathop{\rm codim}\nolimits}
\newcommand{\ra}{\rightarrow}
\newcommand{\wt}{\widetilde}
\newcommand{\inj}{\hookrightarrow}
\begin{document}

\maketitle

\begin{abstract}
In this paper, we clarify the mistakes made in the former article entitled
``Vanishing Theorems on Toric Varieties in Positive Characteristic''.
On the other hand, we use the positive characteristic method to reprove
the Bott vanishing theorem, the degeneration of the Hodge to de Rham
spectral sequence and the Kawamata-Viehweg vanishing theorem for log pairs
on toric varieties over a field of arbitrary characteristic,
where concerned Weil divisors are torus invariant.
\end{abstract}

\setcounter{section}{0}
\section{Introduction}\label{S1}

Throughout this paper, we always work over a field $F$, which is either
{\it a field $K$ of characteristic zero or a perfect field $k$ of characteristic $p>0$}.
The main purpose of this paper is to use the positive characteristic method
to reprove various vanishing theorems on toric varieties.
See Definition \ref{2.2} for the definition of $\wt{\Omega}^\bullet_X(\log D)$,
the Zariski-de Rham complex of $X$ with logarithmic poles along $D$.
The following are the main theorems in this paper.

\begin{thm}[Bott vanishing]\label{1.1}
Let $X$ be a projective toric variety over $F$, $D$ a reduced torus invariant
Weil divisor on $X$, and $L$ an ample invertible sheaf on $X$.
Then for any $j>0$ and any $i\geq 0$,
\[H^j(X,\wt{\Omega}^i_X(\log D)\otimes L)=0.\]
\end{thm}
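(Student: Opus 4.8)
\medskip
\noindent\emph{Plan of proof.}
The plan is to reduce the statement to positive characteristic and then exploit the Frobenius morphism together with the toric combinatorics. Since a toric variety equipped with a reduced torus invariant Weil divisor and a torus invariant ample divisor is encoded by combinatorial data defined over $\Z$, there exist an integer $N\ge 1$, a projective toric scheme $\XX\to\spec\Z[1/N]$ and torus invariant $\DD$, $\LL$ on it whose generic fibre becomes $(X,D,L)$ after base change to $K$ in the case $\ch F=0$. After possibly enlarging $N$, the relative Zariski--de Rham sheaves $\wt{\Omega}^i_{\XX/\Z[1/N]}(\log\DD)$ are coherent, flat over $\Z[1/N]$ and their formation commutes with base change to the fibres (this is built into the combinatorial construction underlying Definition \ref{2.2}). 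By upper semicontinuity of cohomology in a flat projective family it then suffices to prove the vanishing on the fibre over each closed point of $\spec\Z[1/N]$, that is, over a finite --- hence perfect --- field of characteristic $p>0$ with no restriction on $p$. So from now on assume $\ch F=p>0$.

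Let $\Phi\colon X\to X$ denote the Frobenius morphism; on a toric variety it is the finite surjective torus equivariant morphism induced by multiplication by $p$ on the cocharacter lattice $N$, so that $\Phi^*\OO_X(B)\cong\OO_X(pB)$ for every torus invariant Cartier divisor $B$ and in particular $\Phi^*L\cong L^{\otimes p}$. Since $\Phi$ is affine, the projection formula applied with the line bundle $L$, together with the vanishing of higher direct images under $\Phi$, gives for all $i\ge 0$ and $j\ge 0$
\[H^j\bigl(X,\wt{\Omega}^i_X(\log D)\otimes L^{\otimes p}\bigr)\;\cong\;H^j\bigl(X,\Phi_*\wt{\Omega}^i_X(\log D)\otimes L\bigr).\]
The central assertion is that $\wt{\Omega}^i_X(\log D)$ is, as an $\OO_X$-module, a direct summand of $\Phi_*\wt{\Omega}^i_X(\log D)$: this is a logarithmic, higher-form refinement of the known decomposition of $\Phi_*\OO_X$ on a toric variety into a direct sum of torus invariant line bundles, the relevant copy being the weight-zero piece for the $M/pM$-grading on $\Phi_*$, where $M$ is the character lattice. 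Granting this, tensoring the splitting with $L$ and invoking the displayed isomorphism shows that $H^j(X,\wt{\Omega}^i_X(\log D)\otimes L)$ is a direct summand of $H^j(X,\wt{\Omega}^i_X(\log D)\otimes L^{\otimes p})$; iterating with $L,L^{\otimes p},L^{\otimes p^2},\dots$, all ample, exhibits it as a direct summand of $H^j(X,\wt{\Omega}^i_X(\log D)\otimes L^{\otimes p^m})$ for every $m\ge 1$. Since $L$ is ample and $\wt{\Omega}^i_X(\log D)$ is coherent, Serre vanishing annihilates the latter group for $m\gg 0$ whenever $j>0$, forcing $H^j(X,\wt{\Omega}^i_X(\log D)\otimes L)=0$.

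The main obstacle is the splitting of $\Phi_*\wt{\Omega}^i_X(\log D)$. When $X$ is smooth and $D$ is the full torus invariant boundary this is immediate, since then $\Omega^i_X(\log D)$ is free of rank $\binom{n}{i}$ with $n=\dim X$ and one only needs that toric varieties are Frobenius split; for a general reduced torus invariant $D$ on a possibly singular $X$ one must argue $T$-equivariantly, weight by weight, using the combinatorial description of the reflexive sheaves $\wt{\Omega}^i$, reducing to the torus smooth locus where they have explicit bases of invariant logarithmic forms, comparing the computation there with the decomposition of $\Phi_*\OO_X$, and then extending over the singular locus by reflexivity --- equivalently, by pushing down from a torus equivariant resolution and inducting on the number of components of $D$ via the residue exact sequences. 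A second, more routine point is to verify that the formation of $\wt{\Omega}^i(\log D)$ is compatible with reduction modulo all but finitely many primes, which is where the flatness of the toric family and the explicit form of Definition \ref{2.2} are used.
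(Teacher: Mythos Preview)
Your reduction from characteristic zero to positive characteristic---spreading out over a finite-type $\Z$-algebra and invoking upper semicontinuity---matches the paper's argument. The positive-characteristic step, however, is genuinely different. The paper defers to \cite{xie}, where Bott vanishing is obtained via the Deligne--Illusie machinery: one verifies that the toric pair $(X,D)$ and the relative Frobenius lift compatibly to $W_2(k)$, whence the logarithmic de Rham complex decomposes in the derived category and the vanishing follows. Your route is the multiplication-map / Frobenius-splitting argument of Fujino \cite{fu07} and Buch--Thomsen--Lauritzen--Mehta \cite{btlm}, which the paper cites in the introduction but deliberately bypasses: the $M/pM$-weight-zero piece of $\Phi_*\wt{\Omega}^i_X(\log D)$ is $\wt{\Omega}^i_X(\log D)$, and iterated twisting by $L^{\otimes p^m}$ together with Serre vanishing concludes. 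Both approaches are sound. The paper's has the advantage that the same de Rham decomposition simultaneously yields Theorem~\ref{1.2}; yours is more elementary---no $W_2$-liftings, no Cartier operator---and in fact the weight decomposition of $\Phi_{l*}$ works for \emph{any} $l\ge 2$ over any base, so once you have the splitting you could prove the vanishing directly over $K$ without ever reducing modulo $p$.

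Two minor points of hygiene. The multiplication-by-$p$ map and the absolute Frobenius coincide only over $\F_p$; over a general perfect $k$ they differ by the Frobenius of $k$, so what you are actually using is the former, which is $k$-linear. And the clause ``in particular $\Phi^*L\cong L^{\otimes p}$'' requires the (standard) fact that every line bundle on a projective toric variety is isomorphic to $\OO_X(B)$ for some torus invariant Cartier divisor $B$, since $L$ is not assumed torus invariant in the statement. Neither point affects the correctness of your plan.
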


\begin{thm}[Degeneration of Hodge to de Rham spectral sequence]\label{1.2}
Let $X$ be a proper toric variety over $F$, and $D$ a reduced torus invariant
Weil divisor on $X$. Then the Hodge to de Rham spectral sequence degenerates
at $E_1$:
\begin{eqnarray*}
E_1^{ij}=H^j(X,\wt{\Omega}^i_X(\log D))\Longrightarrow
\mathbf{H}^{i+j}(X,\wt{\Omega}^\bullet_X(\log D)).
\end{eqnarray*}
\end{thm}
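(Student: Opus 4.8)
The plan is to deduce Theorem \ref{1.2} from a Deligne--Illusie type decomposition of $\wt{\Omega}^\bullet_X(\log D)$ in positive characteristic, combined with the usual numerical criterion for degeneration. Since $X$ is proper, all the $E_1^{ij}=H^j(X,\wt{\Omega}^i_X(\log D))$ are finite dimensional, and for the bounded spectral sequence $E_1^{ij}\Rightarrow\mathbf{H}^{i+j}$ one always has $\dim_F\mathbf{H}^n(X,\wt{\Omega}^\bullet_X(\log D))\le\sum_{i+j=n}\dim_F E_1^{ij}$, with equality for all $n$ if and only if the sequence degenerates at $E_1$. So it suffices to exhibit, for each $n$, an isomorphism of $F$-vector spaces $\mathbf{H}^n(X,\wt{\Omega}^\bullet_X(\log D))\cong\bigoplus_{i+j=n}H^j(X,\wt{\Omega}^i_X(\log D))$, and the point is to produce it from a derived-category splitting $F_{X*}\wt{\Omega}^\bullet_X(\log D)\cong\bigoplus_i\wt{\Omega}^i_X(\log D)[-i]$ after reducing to characteristic $p$.

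First I would treat the case $\ch F=p>0$, so $F=k$ is perfect. Let $F_X\colon X\to X$ be the absolute Frobenius; since $F_X$ is the identity on the underlying space and $F_{X*}$ is exact, $\mathbf{H}^n(X,\wt{\Omega}^\bullet_X(\log D))$ and $\mathbf{H}^n(X,F_{X*}\wt{\Omega}^\bullet_X(\log D))$ coincide as abelian groups, and the desired numerical equality follows at once from a decomposition $F_{X*}\wt{\Omega}^\bullet_X(\log D)\cong\bigoplus_i\wt{\Omega}^i_X(\log D)[-i]$ in $D(X)$. The two ingredients I would assemble are: (i) a Cartier isomorphism $\wt{\Omega}^i_X(\log D)\xrightarrow{\ \sim\ }\HH^i\bigl(F_{X*}\wt{\Omega}^\bullet_X(\log D)\bigr)$ for the toric pair (harmlessly identifying $X$ with its Frobenius twist, both being toric for the same fan over a perfect field), and (ii) a splitting of the resulting extensions. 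For (i) I would work on the affine toric charts $U_\sigma=\spec F[\sigma^\vee\cap M]$, where $\wt{\Omega}^i_X(\log D)$ has an explicit $T$-equivariant, $M$-graded description built from $\bigwedge^i(\OO_{U_\sigma}\otimes_\Z M)$: on the weight-$m$ piece the de Rham differential is exterior multiplication by the class $\bar m\in F\otimes_\Z M$, i.e. a Koszul differential, and the Cartier operator is read off directly, the partial boundary $D$ being monomial on each chart. For (ii) I would use that the toric pair $(X,D)$ lifts canonically and flatly over $W_2(k)$ (indeed over $\Z$): the fan together with the chosen set of rays defines $(\XX_\Delta,\DD)$ reducing to $(X,D)$; feeding this lift into the logarithmic Deligne--Illusie construction yields the local splittings, and the Koszul/monomial structure from (i) lets one glue them into a global decomposition \emph{without} the customary hypothesis $\dim X<p$.

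For $\ch F=0$ I would spread out. Choose a finitely generated $\Z$-subalgebra $A\subset F$ over which $(X,D)$ descends to a proper flat toric pair $(\XX,\DD)$ over $\spec A$; the combinatorial description of the reflexive log differentials makes $\wt{\Omega}^i_{\XX/A}(\log\DD)$ commute with base change. After shrinking $\spec A$, generic flatness and cohomology-and-base-change make the Hodge numbers $h^{ij}_s:=\dim H^j(\XX_s,\wt{\Omega}^i_{\XX_s}(\log\DD_s))$ and the de Rham Betti numbers $b^n_s:=\dim\mathbf{H}^n(\XX_s,\wt{\Omega}^\bullet_{\XX_s}(\log\DD_s))$ locally constant on $\spec A$. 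Picking a closed point $s$ with $\ch\kappa(s)=p>0$, the positive-characteristic case applied to the toric pair $(\XX_s,\DD_s)$ over the finite (hence perfect) field $\kappa(s)$ gives $\sum_{i+j=n}h^{ij}_s=b^n_s$; by local constancy the same equality holds at the generic fibre, and hence over $F$ after flat base change to any field of characteristic zero.

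The main obstacle I anticipate is the interaction of the reflexive Zariski differentials with the Frobenius--Cartier formalism on a possibly singular $X$ with a merely partial boundary $D$, since the classical Deligne--Illusie machinery is written for smooth varieties and reduced normal crossing divisors. I would resolve this either by staying on the affine toric charts and verifying by hand that $C^{-1}$ and the local liftings/splittings are well defined and $T$-equivariantly natural on the explicit $M$-graded Koszul complexes, or by passing to an equivariant toric resolution $\pi\colon X'\to X$ with reduced boundary $D'$ and using $R\pi_*\Omega^i_{X'}(\log D')=\wt{\Omega}^i_X(\log D)$ together with $R\pi_*\wt{\Omega}^\bullet_{X'}(\log D')=\wt{\Omega}^\bullet_X(\log D)$ to reduce to the smooth case — the latter route requiring care about vanishing of higher direct images and about the compatibility of the de Rham differential with $R\pi_*$ in characteristic $p$. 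Closely tied to this is the second delicate point: removing the restriction $\dim X<p$ in small characteristic, which is precisely where one must exploit the monomial structure of the toric log de Rham complex rather than the generic Deligne--Illusie statement.
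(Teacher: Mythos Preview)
Your proposal is correct and follows essentially the same route as the paper: the positive-characteristic case via a Cartier isomorphism plus a $W_2$-lifting of the toric pair (this is exactly what the paper imports from \cite{xie}, where the global Frobenius lift on $X(\Delta,\Z)$ is used to obtain the Deligne--Illusie decomposition without the $\dim X<p$ restriction), and the characteristic-zero case via spreading out over a finitely generated $\Z$-algebra, making the Hodge and de Rham numbers locally constant, and specializing to a closed point with finite residue field. Your discussion of the subtleties (reflexive differentials, partial boundary, small $p$) correctly isolates the places where the toric monomial structure is needed, and your spreading-out argument is the same as the paper's Proposition~\ref{2.5}/Theorem~\ref{3.6}/Proposition~\ref{3.7} package.
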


\begin{thm}[Kawamata-Viehweg vanishing]\label{1.3}
Let $X$ be a projective toric variety over $F$, and $H$ a nef and big
torus invariant $\Q$-divisor on $X$. Then for any $i>0$,
\[H^i(X,K_X+\ulcorner H\urcorner)=0.\]
\end{thm}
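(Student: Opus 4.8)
The plan is to deduce Theorem \ref{1.3} from Theorems \ref{1.1} and \ref{1.2} by running the cyclic covering argument of Esnault--Viehweg inside the toric category, with the Zariski--de Rham complex $\wt\Omega^\bullet$ playing the role of the usual logarithmic de Rham complex. Write $H=\sum_\rho a_\rho D_\rho$ with $D_\rho$ the torus invariant prime divisors and $a_\rho\in\Q$, so that $K_X=-\sum_\rho D_\rho$, $B_X:=\sum_\rho D_\rho$ is the reduced toric boundary, and $\ulcorner H\urcorner=\sum_\rho\lceil a_\rho\rceil D_\rho=\llcorner H\urcorner+B'$ with $B':=\sum_{a_\rho\notin\Z}D_\rho$ reduced and torus invariant. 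Note that $\wt\Omega^n_X(\log B_X)=\OO_X(K_X+B_X)=\OO_X$, where $n=\dim X$, so Theorem \ref{1.1} with $i=n$ reads $H^j(X,\OO_X(K_X+D)\otimes L)=0$ for $j>0$, $L$ ample invertible and $D$ reduced torus invariant.

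\emph{Step 1 (reduction to $H$ ample).} Since $H$ is big, Kodaira's lemma in its torus invariant form produces an ample $\Q$-Cartier torus invariant divisor $A$ and an effective torus invariant $\Q$-divisor $E$ with $H\sim_\Q A+E$. For all sufficiently small rational $\varepsilon>0$ the divisor $H_\varepsilon:=H-\varepsilon E\sim_\Q(1-\varepsilon)H+\varepsilon A$ is ample and $\Q$-Cartier, while each coefficient of $H_\varepsilon$ stays inside the half-open interval $(\lceil a_\rho\rceil-1,\lceil a_\rho\rceil]$, so that $\ulcorner H_\varepsilon\urcorner=\ulcorner H\urcorner$. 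Replacing $H$ by $H_\varepsilon$ we may assume from now on that $H$ is ample and $\Q$-Cartier. (When $\ch F=p>0$ one first spreads the whole combinatorial datum out over a finitely generated $\Z$-algebra and reduces to this same situation; the cyclic cover built below is then taken of degree prime to $p$, which is harmless since the denominator may be freely enlarged.)

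\emph{Steps 2 and 3 (the toric cyclic cover and the conclusion).} Fix a positive integer $r$ with $ra_\rho\in\Z$ for all $\rho$, with $rH$ Cartier and very ample, and, if $\ch F=p>0$, with $p\nmid r$. From the torus invariant fractional part $\{H\}=\sum_\rho c_\rho D_\rho$ and $r$ one constructs a degree-$r$ cyclic cover $\pi\colon Y\to X$: $Y$ is again a projective toric variety (its fan comes from that of $X$ by passing to a sublattice of index $r$), $\pi$ is finite with reduced branch divisor supported on $B_X$, and, writing $B_Y$ for the reduced toric boundary of $Y$, the $\Z/r$-action furnishes an eigensheaf decomposition of complexes
\[\pi_*\wt\Omega^\bullet_Y(\log B_Y)\;\cong\;\bigoplus_{j=0}^{r-1}\wt\Omega^\bullet_X(\log B_X)\otimes\OO_X\bigl(-\lceil j\{H\}\rceil\bigr),\]
each graded piece carrying the induced (twisted) differentials. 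In top degree this reads $\pi_*\wt\Omega^n_Y(\log B_Y)\cong\bigoplus_j\OO_X(-\lceil j\{H\}\rceil)$; choosing an ample sheaf on $X$ to absorb the integral part of $\ulcorner H\urcorner$ (the room coming from $rH$ being Cartier and ample), the summand attached to a suitable index $j$ of $\pi_*\bigl(\wt\Omega^n_Y(\log B_Y)\otimes\pi^*\AA\bigr)$ recovers $\OO_X(K_X+\ulcorner H\urcorner)$, while $\pi^*\AA$ is ample on $Y$. Since $\pi$ is finite, $H^i(X,-)$ of a direct summand of $\pi_*\FF$ is a direct summand of $H^i(Y,\FF)$; applying Theorem \ref{1.1} on the projective toric variety $Y$ with $i=n$ and the ample invertible sheaf $\pi^*\AA$ gives $H^i\bigl(Y,\wt\Omega^n_Y(\log B_Y)\otimes\pi^*\AA\bigr)=0$ for all $i>0$, whence $H^i(X,K_X+\ulcorner H\urcorner)=0$ for $i>0$. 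To make the selection of the right eigenspace and the bookkeeping of non-Cartier classes rigorous one runs the full Esnault--Viehweg machine on $Y$: the $E_1$-degeneration of Theorem \ref{1.2} together with the Bott vanishing of Theorem \ref{1.1} force vanishing statements for each twisted graded piece $\wt\Omega^a_X(\log B_X)\otimes\OO_X(-\lceil j\{H\}\rceil)$ in the appropriate range, and Serre duality on the Cohen--Macaulay variety $X$ then lets one read off the vanishing attached to $\ulcorner H\urcorner$.

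\emph{Main obstacle.} The crux is Step 2: performing the cyclic cover \emph{torically}, so that $Y$, $B_Y$ and $\wt\Omega^\bullet_Y(\log B_Y)$ stay inside the world where Theorems \ref{1.1} and \ref{1.2} are available, and --- more delicate --- handling the Weil divisor bookkeeping when $X$ is singular, since $\llcorner H\urcorner$, $\{H\}$ and $K_X$ need not be $\Q$-Cartier, so the projection formula and the reflexive pullback along $\pi$ must be verified by hand. Concretely, one has to establish the displayed eigensheaf decomposition and the identification of the correct graded summand with $\OO_X(K_X+\ulcorner H\urcorner)$ directly on the level of fans and of the sheaves $\OO_X(\text{torus invariant Weil divisor})$. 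Once these combinatorial identifications are in place, the conclusion is essentially formal given Theorems \ref{1.1} and \ref{1.2}.
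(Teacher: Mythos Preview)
Your route is genuinely different from the paper's, and the gap you flag as the ``main obstacle'' is real and unfilled. The paper never reduces Theorem~\ref{1.3} to Theorems~\ref{1.1} and~\ref{1.2}. In characteristic $p$ it passes to a simplicial refinement $\pi\colon X'\to X$ that is an isomorphism in codimension one, so that $\pi_*\OO_{X'}(K_{X'}+\ulcorner\pi^*H\urcorner)=\OO_X(K_X+\ulcorner H\urcorner)$ while the higher direct images vanish by Lemma~\ref{2.7}; the simplicial case is then quoted from \cite[Theorem~1.3]{xie}, whose proof uses the Frobenius and the decomposition of the de~Rham complex. In characteristic zero the paper performs exactly your Step~1 and then spreads out over a finitely generated $\Z$-algebra, concluding by semicontinuity from the positive-characteristic statement at a closed point. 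No covering trick appears, and Theorems~\ref{1.1}--\ref{1.2} are logically independent of Theorem~\ref{1.3} in the paper's architecture.

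In your Steps~2--3 several load-bearing claims are only asserted. A sublattice $N'\subset N$ of index $r$ produces an \emph{abelian} toric cover whose ramification index along $D_\rho$ is $[N:N'+\Z v_\rho]$; you give no construction of a cyclic $N'$ whose ramification matches the denominators of the individual coefficients of $\{H\}$, and in general no such sublattice exists, so the displayed decomposition with summands $\OO_X(-\lceil j\{H\}\rceil)$ has nothing behind it. Even granting some splitting $\pi_*\OO_Y=\bigoplus_j\OO_X(-E_j)$, your direct argument requires an ample \emph{invertible} $\AA$ with $\AA\otimes\OO_X(-E_j)\cong\OO_X(K_X+\ulcorner H\urcorner)$ for some $j$, i.e.\ $K_X+\ulcorner H\urcorner+E_j$ Cartier; on a non-$\Q$-factorial $X$ none of $K_X$, $\ulcorner H\urcorner$, $E_j$ need be $\Q$-Cartier, and you provide no mechanism for producing such a $j$. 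The fallback (``run the full Esnault--Viehweg machine'' and then Serre-dualize) presupposes exactly the reflexive projection formulas and twisted logarithmic identifications on a singular $X$ that you say must be ``verified by hand'' but do not verify. Finally, note that $\wt\Omega^n_Y(\log B_Y)\cong\OO_Y$, so the instance of Theorem~\ref{1.1} you would actually be invoking is just $H^j(Y,\pi^*\AA)=0$ for $j>0$; the logarithmic theory and Theorem~\ref{1.2} are doing no visible work in the argument as written. In short, your own diagnosis is accurate: the combinatorial identifications are the entire content of the proof, and they are missing.
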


In fact, the above results have been initiated by Danilov \cite{da},
Buch-Thomsen-Lauritzen-Mehta \cite{btlm}, Musta\c{t}\v{a} \cite{mu},
and have already been proven by Fujino \cite{fu07} via the multiplication maps.
In this paper, we shall reprove these results by means of the positive
characteristic method, which consists of two steps: the first one is
to prove the positive characteristic part by using the decomposition of
de Rham complexes; the second one is to deduce the characteristic zero part
by means of the reduction modulo $p$ method.

In \S \ref{S2}, we will clarify the mistakes made in a former paper.
In \S \ref{S3}, we will recall some definitions and preliminary results.
\S \ref{S4} and \S \ref{S5} are devoted to the proofs of the positive
characteristic part and the characteristic zero part of the main theorems
respectively. For the necessary notions and results in toric geometry,
we refer the reader to \cite{da}, \cite{od}, \cite{fu93} and \cite{clh}.

\begin{nota}
We use $[B]=\sum [b_i] B_i$ (resp.\ $\ulcorner B\urcorner=\sum \ulcorner
b_i\urcorner B_i$, $\langle B\rangle=\sum \langle b_i\rangle B_i$)
to denote the round-down (resp.\ round-up, fractional part)
of a $\Q$-divisor $B=\sum b_iB_i$, where for a real number $b$,
$[b]:=\max\{ n\in\Z \,|\,n\leq b \}$, $\ulcorner b\urcorner:=-[-b]$
and $\langle b\rangle:=b-[b]$.
\end{nota}

\begin{ack}
I would like to express my gratitude to Professors Luc Illusie and
Olivier Debarre for useful comments.
\end{ack}

\section{Erratum to a former paper}\label{S2}

In the former paper \cite{xie} entitled ``Vanishing Theorems on Toric Varieties
in Positive Characteristic'', the author proved that Theorems \ref{1.1}--\ref{1.3}
hold for not necessarily torus invariant divisors on toric varieties in positive
characteristic. Recently, an anonymous algebraic geometer provides the following
example to show that Theorems \ref{1.1} does not hold for not necessarily torus
invariant divisors, which means that some mistakes were contained in the arguments
of \cite{xie}.

\begin{ex}\label{5.1}
Let $X=\PP^2$, $D\subseteq X$ a smooth projective curve of degree $d\geq 4$,
and $L=\OO_X(1)$. Then the cohomology exact sequence associated to
the short exact sequence
$$0\ra \Omega_X^1\otimes L\ra \Omega_X^1(\log D)\otimes L\ra \OO_D\otimes L\ra 0$$
together with the standard Bott vanishing for $\PP^2$ show that
$$H^1(X,\Omega_X^1(\log D)\otimes L)\cong H^1(X,L|_D).$$
Then the cohomology exact sequence associated to the short exact sequence
$$0\ra \OO_X(-d+1)\ra \OO_X(1)\ra \OO_D(1)\ra 0$$
shows that $H^1(X,L|_D)\cong H^2(X,\OO_X(-d+1))$, which is nonzero as $d\geq 4$.
Therefore $H^1(X,\Omega_X^1(\log D)\otimes L)\neq 0$, which implies that Theorem \ref{1.1}
does not hold for not necessarily torus invariant divisors on toric varieties
in arbitrary characteristic.
\end{ex}

The mistakes stem from the following ambiguity: in Definition 2.3 of \cite{xie},
a lifting of the relative Frobenius morphism $\wt{F}: \wt{X}\ra \wt{X}'$ is said to
be compatible with $\wt{D}$ if $\wt{F}^*\OO_{\wt{X}'}(-\wt{D}')=\OO_{\wt{X}}(-p\wt{D})$,
which means the equality of sheaves of ideals, but not an isomorphism of line bundles.
However, in the proof of Theorem 3.7 (ii) of \cite{xie}, the author only proved that
there exists an isomorphism of line bundles for not necessarily torus invariant divisors,
which is not enough for obtaining applications to not necessarily torus invariant divisors
on toric varieties.

\begin{rem}\label{5.2}
It should be mentioned that, in \cite{xie}, Theorem 2.4--Theorem 2.8 and Theorem 3.1--Theorem 3.6
are correct since these results have been proven in more general situations; whereas Theorem 3.7 (ii)
is valid for torus invariant divisors on toric varieties, therefore Theorems 1.1--1.3 of
\cite{xie} do hold true for torus invariant divisors on toric varieties in positive characteristic.
\end{rem}

\section{Preliminaries}\label{S3}

First of all, we recall some definitions and notations for toric varieties
from \cite{fu93}. Let $N$ be a lattice of rank $n$, and $M$ the dual lattice
of $N$. Let $\Delta$ be a fan consisting of strongly convex rational
polyhedral cones in $N_\R$, and let $A$ be a ring. In general, we denote
the toric variety associated to the fan $\Delta$ over the ground ring $A$
by $X(\Delta,A)$. More precisely, to each cone $\sigma$ in $\Delta$, there
is an associated affine toric variety $U_{(\sigma,A)}=\spec A[\sigma^\vee
\cap M]$, and these $U_{(\sigma,A)}$ can be glued together to form the toric
variety $X(\Delta,A)$ over $\spec A$.

Let $X$ be a normal variety over $F$, and $D$ a reduced Weil divisor on $X$.
Then there exists an open subset $U$ of $X$ such that $\codim_X(X-U)\geq 2$,
$U$ is smooth over $F$, and $D|_U$ is simple normal crossing on $U$.
Such an $U$ is called a required open subset for the log pair $(X,D)$.

\begin{ex}\label{2.1}
(i) Let $X$ be a smooth variety over $F$, and $D$ a simple normal crossing
divisor on $X$. Then we can take the largest required open subset $U=X$.

(ii) Let $X=X(\Delta,F)$ be a toric variety, and $D$ a torus invariant reduced
Weil divisor on $X$. Take $U$ to be $X(\Delta_1,F)$, where $\Delta_1$ is the fan
consisting of all 1-dimensional cones in $\Delta$. Then it is easy to show that
$U$ is an open subset of $X$ with $\codim_X(X-U)\geq 2$, $U$ is smooth over $F$,
and $D|_U$ is simple normal crossing on $U$. Hence $U$ is a required open subset
for $(X,D)$.
\end{ex}

\begin{defn}\label{2.2}
Let $X$ be a normal variety over $F$, and $D$ a reduced Weil divisor on $X$.
Take a required open subset $U$ for $(X,D)$, and denote $\iota:U\hookrightarrow X$
to be the open immersion. For any $i\geq 0$, define the Zariski sheaf of differential
$i$-forms of $X$ with logarithmic poles along $D$ by
\begin{eqnarray*}
\wt{\Omega}^i_X(\log D)=\iota_*\Omega^i_U(\log D|_U).
\end{eqnarray*}
Since $\codim_X(X-U)\geq 2$ and $\Omega^i_U(\log D|_U)$ is a locally free
sheaf on $U$, $\wt{\Omega}^i_X(\log D)$ is a reflexive sheaf on $X$, and
the definition of $\wt{\Omega}^i_X(\log D)$ is independent of the choice
of the open subset $U$. Furthermore, we have the Zariski-de Rham complex
$(\wt{\Omega}^\bullet_X(\log D),d)$.
\end{defn}

It follows from \cite[Proposition II.8.10]{ha77} that the K\"ahler sheaves of
relative differentials are compatible with base changes. Similarly, we can show
that the Zariski sheaves are compatible with base changes.

\begin{defn}\label{2.3}
Let $f:\XX\ra S$ be a separated morphism of varieties over $F$. Assume that
$\UU$ is an open subset of $\XX$ and $\DD$ is a reduced Weil divisor on $\XX$
such that $\codim_{\XX/S}(\XX-\UU)\geq 2$, $\UU$ is smooth over $S$, and $\DD|_{\UU}$
is relatively simple normal crossing over $S$. Denote $\iota:\UU\hookrightarrow\XX$
to be the open immersion, and for any $i\geq 0$, define the Zariski sheaf of relative
differentials of $\XX$ over $S$ by
\begin{eqnarray*}
\wt{\Omega}^i_{\XX/S}(\log\DD)=\iota_*\Omega^i_{\UU/S}(\log\DD|_{\UU}).
\end{eqnarray*}
Furthermore, we have the Zariski-de Rham complex $(\wt{\Omega}^\bullet_{\XX/S}(\log\DD),d)$.
\end{defn}

\begin{lem}\label{2.4}
With notation and assumptions as above, let $g:T\ra S$ be a morphism of varieties
over $F$, $\YY=\XX\times_S T$, $\VV=\UU\times_S T$, and $\DD'=\DD\times_S T$ with
the induced cartesian diagrams:
\[
\xymatrix{
\VV \ar@{^{(}->}[r]^{\iota'}\ar[d]^{g''} & \YY \ar[r]^{f'}\ar[d]^{g'} & T \ar[d]^g \\
\UU \ar@{^{(}->}[r]^\iota & \XX \ar[r]^f & S.
}
\]
Then for any $i\geq 0$,
\[ g'^*\wt{\Omega}^i_{\XX/S}(\log\DD)\cong\wt{\Omega}^i_{\YY/T}(\log\DD'). \]
\end{lem}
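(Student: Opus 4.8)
The plan is to reduce the statement about the Zariski (pushed-forward) sheaves to the already-known base-change property of the ordinary sheaves of relative logarithmic differentials on the smooth locus, and then check that pulling back commutes with the pushforward $\iota_*$ in this situation. First I would record that on the open subset $\UU$, the sheaf $\Omega^i_{\UU/S}(\log\DD|_\UU)$ is locally free, and since $\VV=\UU\times_S T$ with $\DD|_\UU$ relatively simple normal crossing over $S$, the divisor $\DD'|_\VV$ is relatively simple normal crossing over $T$ and the standard base-change isomorphism for Kähler differentials with log poles (the analogue of \cite[Proposition II.8.10]{ha77} cited just before Definition \ref{2.3}) gives
\[ g''^*\,\Omega^i_{\UU/S}(\log\DD|_\UU)\;\cong\;\Omega^i_{\VV/T}(\log\DD'|_\VV). \]
So the heart of the matter is the identity $g'^*\iota_*\FF\cong\iota'_*g''^*\FF$ for $\FF=\Omega^i_{\UU/S}(\log\DD|_\UU)$, where the square with $\iota,\iota',g',g''$ is the cartesian diagram in the statement.

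The key step is therefore to prove this commutation of $g'^*$ with $\iota_*$. There is always a natural base-change morphism $g'^*\iota_*\FF\to\iota'_*g''^*\FF$ (adjunction applied to $g''^*\iota^*\iota_*\FF\to g''^*\FF$, or: $\iota^*g'^*\iota_*\FF=g''^*\iota^*\iota_*\FF\to g''^*\FF$ and then adjoint over $\iota'$). To show it is an isomorphism I would use that $\wt{\Omega}^i_{\XX/S}(\log\DD)=\iota_*\FF$ is \emph{reflexive} (as remarked after Definition \ref{2.2}: $\iota_*$ of a locally free sheaf across a locus of relative codimension $\ge 2$), hence its formation as the "$S_2$-ification" away from codimension $\ge 2$ is robust; concretely, $\iota_*\FF$ is the unique reflexive extension of $\FF$ across $\XX-\UU$. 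Pulling back by the flat — or at least suitably well-behaved — map $g'$, and using that $\codim_{\YY/T}(\YY-\VV)\ge 2$ still holds because $\YY-\VV=(\XX-\UU)\times_S T$ and codimension of the generic fibres is preserved under base change, one gets that $g'^*(\iota_*\FF)$ restricted to $\VV$ is $g''^*\FF$ and agrees with $\iota'_*g''^*\FF$ there; the two are then identified provided $g'^*(\iota_*\FF)$ has no extra sections or relations along $\YY-\VV$, i.e. provided reflexivity (or at least $S_2$) is preserved. The cleanest route is: if $g$ is flat then $g'$ is flat, flat pullback of a reflexive sheaf along a morphism of (nice) schemes whose fibres are at worst mildly singular is again $S_2$ in the relevant range, so $g'^*\iota_*\FF$ is the reflexive hull of its restriction to $\VV$, which is $\iota'_*g''^*\FF$. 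For a general $g$ one reduces to the flat case by the usual trick, or one invokes that $\wt\Omega^i$ is defined fibrewise-compatibly (Definition \ref{2.2} is insensitive to the chosen $U$), so one may compute after restricting to fibres.

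In outline the steps are: (1) verify $\DD'|_\VV$ is relatively SNC over $T$ and $\codim_{\YY/T}(\YY-\VV)\ge2$, so that $\wt\Omega^i_{\YY/T}(\log\DD')=\iota'_*\Omega^i_{\VV/T}(\log\DD'|_\VV)$ is defined by the same recipe; (2) apply ordinary log-differential base change on the smooth loci to get $g''^*\Omega^i_{\UU/S}(\log\DD|_\UU)\cong\Omega^i_{\VV/T}(\log\DD'|_\VV)$; (3) construct the natural map $g'^*\iota_*\FF\to\iota'_*g''^*\FF$ and prove it is an isomorphism by a codimension-$2$/reflexivity argument, reducing to the flat case if necessary. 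I expect step (3) — the interchange of $g'^*$ with $\iota_*$, i.e.\ that reflexivity of $\wt\Omega^i$ survives the base change — to be the only real obstacle; steps (1) and (2) are essentially bookkeeping with the definitions and the cited base-change statement for Kähler differentials.
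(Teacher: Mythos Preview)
Your approach is essentially identical to the paper's: the paper also reduces to the base-change isomorphism $g''^*\Omega^i_{\UU/S}(\log\DD|_\UU)\cong\Omega^i_{\VV/T}(\log\DD'|_\VV)$ on the smooth locus via \cite[Proposition II.8.10]{ha77}, and the only other ingredient is the commutation $g'^*\iota_*=\iota'_*g''^*$, which the paper dispatches in one phrase (``by a direct verification'') while you unpack it as a reflexivity/codimension-$2$ argument. Your caution about step (3) is well placed---the commutation is not automatic for arbitrary sheaves and base changes---but your treatment of the non-flat case (``the usual trick'', or ``compute fibrewise'') is no more complete than the paper's, so in the end both proofs rest on the same unexamined point.
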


\begin{proof}
Consider the left cartesian diagram. By a direct verification, we can show that
$g'^*\iota_*=\iota'_* g''^*$ holds, hence $g'^*\wt{\Omega}^i_{\XX/S}(\log\DD)
=g'^*\iota_*\Omega^i_{\UU/S}(\log\DD|_{\UU})=\iota'_* g''^*\Omega^i_{\UU/S}(\log\DD|_{\UU})
\cong\iota'_*\Omega^i_{\VV/T}(\log\DD'|_{\VV})=\wt{\Omega}^i_{\YY/T}(\log\DD')$
by \cite[Proposition II.8.10]{ha77}.
\end{proof}

\begin{prop}\label{2.5}
With notation and assumptions as in Lemma \ref{2.4}, assume further that $f:\XX\ra S$
is proper and $S$ is an affine variety.

(i) The sheaves $R^jf_*\wt{\Omega}^i_{\XX/S}(\log\DD)$ and
$\mathbf{R}^nf_*\wt{\Omega}^\bullet_{\XX/S}(\log\DD)$ are coherent.
There exists a nonempty open subset $Z$ of $S$ such that, for any $(i,j)$ and for any $n$,
the restrictions of these sheaves to $Z$ are locally free of finite rank.

(ii) For any $i\in\Z$ and for any morphism $g:T\ra S$, consider the following
cartesian diagram:
\[
\xymatrix{
\YY \ar[r]^{f'}\ar[d]^{g'} & T \ar[d]^g \\
\XX \ar[r]^f & S.
}
\]
Then the base change maps are isomorphisms in $D(T)$:
\begin{eqnarray}
\mathbf{L}g^*\mathbf{R}f_*\wt{\Omega}^i_{\XX/S}(\log\DD) & \longrightarrow &
\mathbf{R}f'_*\wt{\Omega}^i_{\YY/T}(\log\DD'), \label{es4} \\
\mathbf{L}g^*\mathbf{R}f_*\wt{\Omega}^\bullet_{\XX/S}(\log\DD) & \longrightarrow &
\mathbf{R}f'_*\wt{\Omega}^\bullet_{\YY/T}(\log\DD'). \label{es5}
\end{eqnarray}

(iii) Fix $i\in\Z$ and assume that for any $j$, the sheaf $R^jf_*\wt{\Omega}^i_{\XX/S}(\log\DD)$
is locally free over $S$ of constant rank $h^{ij}$. Then for any $j$, the base change map deduced
from (\ref{es4}) is an isomorphism:
\begin{eqnarray}
g^*R^jf_*\wt{\Omega}^i_{\XX/S}(\log\DD) & \longrightarrow &
R^jf'_*\wt{\Omega}^i_{\YY/T}(\log\DD'). \label{es6}
\end{eqnarray}
In particular, $R^jf'_*\wt{\Omega}^i_{\YY/T}(\log\DD')$ is locally free of rank $h^{ij}$.

(iv) Assume that for any $n$, the sheaf $\mathbf{R}^nf_*\wt{\Omega}^\bullet_{\XX/S}(\log\DD)$
is locally free over $S$ of constant rank $h^{n}$. Then for any $n$, the base change map deduced
from (\ref{es5}) is an isomorphism:
\begin{eqnarray}
g^*\mathbf{R}^nf_*\wt{\Omega}^\bullet_{\XX/S}(\log\DD) & \longrightarrow &
\mathbf{R}^nf'_*\wt{\Omega}^\bullet_{\YY/T}(\log\DD'). \label{es7}
\end{eqnarray}
In particular, $\mathbf{R}^nf'_*\wt{\Omega}^\bullet_{\YY/T}(\log\DD')$ is locally free of rank $h^{n}$.
\end{prop}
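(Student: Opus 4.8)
The plan is to reduce everything to standard cohomology-and-base-change theory (EGA III / \cite[III.12]{ha77}, or Mumford's sharper statements) applied to the complex $\wt{\Omega}^\bullet_{\XX/S}(\log\DD)$. The point is that although the individual sheaves $\wt{\Omega}^i_{\XX/S}(\log\DD)$ need not be $f$-flat away from a good locus, they are coherent, and the complex is a bounded complex of coherent $\OO_{\XX}$-modules; so the general base-change formalism for such complexes on a proper morphism over a Noetherian base applies verbatim. For part (i), I would first argue coherence: $\Omega^i_{\UU/S}(\log\DD|_{\UU})$ is locally free on $\UU$, its pushforward under the open immersion $\iota$ is coherent on $\XX$ because $\codim_{\XX/S}(\XX-\UU)\geq 2$ and $\XX$ is normal (this is the usual $S_2$-extension argument; here one uses that $\XX$, being a toric variety or more generally normal, satisfies Serre's $S_2$), hence $R^jf_*$ of it is coherent on $S$ since $f$ is proper; the hypercohomology sheaf $\mathbf{R}^nf_*\wt{\Omega}^\bullet_{\XX/S}(\log\DD)$ is then coherent by the standard spectral-sequence argument from the coherent terms. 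Generic local freeness over the irreducible (or at least reduced) base $S$ is the classical ``generic flatness'' statement applied to finitely many coherent sheaves simultaneously: intersect the finitely many nonempty open loci over which each $R^jf_*\wt{\Omega}^i$ and each $\mathbf{R}^nf_*\wt{\Omega}^\bullet$ is locally free to get the single open set $Z$.

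For part (ii) I would invoke Lemma \ref{2.4}, which identifies the derived pullback of each term of the complex with the corresponding term on $\YY$ — more precisely, combining Lemma \ref{2.4} with the fact that $\wt{\Omega}^i_{\XX/S}(\log\DD)$ is $S$-flat over the good locus, and using base change for a proper morphism and a bounded-below complex of coherent sheaves, one gets $\mathbf{L}g^*\mathbf{R}f_*(-)\xrightarrow{\sim}\mathbf{R}f'_*\mathbf{L}g'^*(-)$ applied to $\wt{\Omega}^i_{\XX/S}(\log\DD)$ and to $\wt{\Omega}^\bullet_{\XX/S}(\log\DD)$; Lemma \ref{2.4} then rewrites $\mathbf{L}g'^*$ of the pulled-back sheaf/complex as the relative log-differentials on $\YY/T$, giving (\ref{es4}) and (\ref{es5}). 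The subtlety here is that the general projection/base-change isomorphism $\mathbf{L}g^*\mathbf{R}f_*\cong\mathbf{R}f'_*\mathbf{L}g'^*$ holds for $f$ proper and any complex with quasi-coherent, bounded-below cohomology over a Noetherian base without flatness hypotheses (SGA 6, or Lipman's notes) — so no flatness of the $\wt{\Omega}^i$ over all of $S$ is needed for (ii) itself.

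Parts (iii) and (iv) are then a formal consequence of the standard ``cohomology and base change'' criterion: if $\mathbf{R}f_*\FF$ (for $\FF$ a suitable complex, here $\wt{\Omega}^i_{\XX/S}(\log\DD)$ in degree $0$, or the full de Rham complex) has the property that a particular cohomology sheaf $R^jf_*\FF$ is locally free of constant rank, then the derived base-change isomorphism (\ref{es4}) (resp. (\ref{es5})) descends to an ordinary base-change isomorphism on $H^j$ (resp. $\mathbf{H}^n$), and the target is locally free of the same rank; this is exactly the Grauert-type consequence of the base-change theorem applied degree by degree, using that local freeness of $R^jf_*\FF$ together with the derived base change forces the next differential in the base-change spectral sequence to vanish. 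For (iv) one should be a little careful to phrase the base-change result for hypercohomology of the complex rather than for a single sheaf, but this is standard: apply the criterion to the object $\mathbf{R}f_*\wt{\Omega}^\bullet_{\XX/S}(\log\DD)$ of $D^b_{\mathrm{coh}}(S)$, whose cohomology sheaves are the $\mathbf{R}^nf_*\wt{\Omega}^\bullet_{\XX/S}(\log\DD)$.

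I expect the only genuinely non-formal point to be \emph{coherence of $\iota_*\Omega^i_{\UU/S}(\log\DD|_{\UU})$} — i.e., that pushing forward along the codimension-$\geq 2$ open immersion does not destroy finite generation — together with checking that the relative log-de Rham terms are honestly $S$-flat over the open set $Z$ so that Grauert's theorem is applicable there; everything after that is a bookkeeping exercise in the derived base-change machinery. In the toric setting one may alternatively prove coherence and flatness very concretely, since $\wt{\Omega}^i_{\XX/S}(\log\DD)$ decomposes as a direct sum of rank-one reflexive sheaves indexed by lattice data (this is the content of the results recalled from \cite{da}, \cite{od}), but the general argument above suffices.
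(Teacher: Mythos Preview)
Your overall architecture---coherence via proper pushforward and the Hodge-to-de Rham spectral sequence, generic local freeness over an integral base, derived base change for (ii), and a formal projectivity argument for (iii)--(iv)---matches the paper's. The paper's execution of (ii) is more hands-on than yours: it represents $\mathbf{R}f_*$ by the \v{C}ech complex $f_*\check{\CC}(\WW,\wt{\Omega}^i_{\XX/S}(\log\DD))$ for a finite affine cover $\WW$, observes via Lemma~\ref{2.4} that this complex pulls back on the nose under $g^*$, and then notes that since its terms are flat $A$-modules, $g^*$ already computes $\mathbf{L}g^*$. For (iii)--(iv) the paper invokes an explicit homological lemma (Lemma~\ref{2.6}): a bounded complex of finitely generated projectives whose cohomology modules are projective has cohomology commuting with any scalar extension---equivalent to your Grauert-type reasoning.

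There is, however, a genuine gap in your treatment of (ii). You are right that the abstract isomorphism $\mathbf{L}g^*\mathbf{R}f_*\cong\mathbf{R}f'_*\mathbf{L}g'^*$ requires no flatness hypothesis. But the next step---identifying $\mathbf{L}g'^*\wt{\Omega}^i_{\XX/S}(\log\DD)$ with $\wt{\Omega}^i_{\YY/T}(\log\DD')$---does: Lemma~\ref{2.4} only computes the \emph{underived} pullback $g'^*$, and upgrading this to $\mathbf{L}g'^*$ requires $\wt{\Omega}^i_{\XX/S}(\log\DD)$ to be $S$-flat on all of $\XX$, not merely ``over the good locus''. So your sentence ``no flatness of the $\wt{\Omega}^i$ over all of $S$ is needed for (ii) itself'' is incorrect. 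The paper faces the identical issue---its claim that the \v{C}ech complex has flat components is exactly the statement that $\Gamma(W,\wt{\Omega}^i_{\XX/S}(\log\DD))$ is $A$-flat for each affine $W$ in the cover---and this is where the toric hypothesis is silently doing work: over a toric affine chart these sections form a free $A$-module, being a direct sum of copies of $A$ indexed by lattice points. In the bare generality of Definition~\ref{2.3}, reflexivity and $S_2$ alone do not force $S$-flatness of $\iota_*\Omega^i_{\UU/S}(\log\DD|_\UU)$, so you should either verify flatness directly in the toric case (as you hint at in your final paragraph) or state it as an explicit hypothesis before invoking Lemma~\ref{2.4} at the derived level.
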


\begin{proof}
(i) The fact that the sheaves $R^jf_*\wt{\Omega}^i_{\XX/S}$ are coherent is a particular
case of the finiteness theorem of Grothendieck (cf.\ \cite[III.3]{ega} or \cite[Theorem III.8.8]{ha77}
for projective morphisms). The coherence of $\mathbf{R}^nf_*\wt{\Omega}^\bullet_{\XX/S}(\log\DD)$
follows from that of $R^jf_*\wt{\Omega}^i_{\XX/S}$ by the relative Hodge to de Rham spectral sequence.
For the second assertion of (i), denote by $A$ the integral noetherian ring of $S$, $K$ its field of
fractions, which is the localization of $S$ at its generic point $\eta$. Set for brevity
$R^jf_*\wt{\Omega}^i_{\XX/S}=\GG^{ij}$, $\mathbf{R}^nf_*\wt{\Omega}^\bullet_{\XX/S}(\log\DD)=\GG^n$.
The fiber of $\GG^{ij}$ (resp.\ $\GG^n$) at $\eta$ is free of finite rank (i.e.\ a $K$-vector space
of finite dimension), and is the direct limit of $\GG^{ij}|_{D(s)}$ (resp.\ $\GG^n|_{D(s)}$) for
$s$ running through $A$. It follows from Lemma \ref{3.1} that there exists an $s\in A$ such that
$\GG^{ij}|_{D(s)}$ (resp.\ $\GG^n|_{D(s)}$) is free of finite rank.

(ii) Choose a finite covering $\WW$ of $\XX$ by open affine subsets, denote by $\WW'$ the open
covering of $\YY$ deduced from $\WW$ by the base change. Since $\XX$ is proper, hence separated over $S$,
and $S$ is affine, the finite intersections of open subsets in $\WW$ are affine \cite[Ex.II.4.3]{ha77},
and similarly the finite intersections of open subsets in $\WW'$ are affine over $T$.
As a consequence of \cite[Proposition III.8.7]{ha77}, $\mathbf{R}f_*\wt{\Omega}^i_{\XX/S}(\log\DD)$
(resp.\ $\mathbf{R}f'_*\wt{\Omega}^i_{\YY/T}(\log\DD')$) is represented by $f_*\check{\CC}(\WW,
\wt{\Omega}^i_{\XX/S}(\log\DD))$ (resp.\ $f'_*\check{\CC}(\WW',\wt{\Omega}^i_{\YY/T}(\log\DD'))$),
where $\check{\CC}(\WW,\cdot)$ denotes the $\check{\mathrm{C}}$ech complex of sheaves.
By Lemma \ref{2.4}, we have a canonical isomorphism of complexes:
\[
g^*f_*\check{\CC}(\WW,\wt{\Omega}^i_{\XX/S}(\log\DD))\stackrel{\sim}{\longrightarrow}
f'_*\check{\CC}(\WW',\wt{\Omega}^i_{\YY/T}(\log\DD')).
\]
Since the complex $f_*\check{\CC}(\WW,\wt{\Omega}^i_{\XX/S}(\log\DD))$ is bounded and with flat
components, this isomorphism realizes the isomorphism (\ref{es4}).
$\mathbf{R}f_*\wt{\Omega}^\bullet_{\XX/S}(\log\DD)$
(resp.\ $\mathbf{R}f'_*\wt{\Omega}^\bullet_{\YY/T}(\log\DD')$) is similarly represented by
$f_*\check{\CC}(\WW,\wt{\Omega}^\bullet_{\XX/S}(\log\DD))$
(resp.\ $f'_*\check{\CC}(\WW',\wt{\Omega}^\bullet_{\YY/T}(\log\DD'))$),
where $\check{\CC}(\WW,\cdot)$ denotes the associated total complex of the
$\check{\mathrm{C}}$ech bicomplex, and we have a canonical isomorphism of complexes:
\[
g^*f_*\check{\CC}(\WW,\wt{\Omega}^\bullet_{\XX/S}(\log\DD))\stackrel{\sim}{\longrightarrow}
f'_*\check{\CC}(\WW',\wt{\Omega}^\bullet_{\YY/T}(\log\DD')),
\]
which realizes the isomorphism (\ref{es5}).

The conclusions of (iii) and (iv) follow from (ii) and the following lemma,
which is a standard result in homological algebra.
\end{proof}

\begin{lem}\label{2.6}
Let $A$ be a noetherian ring and $E^\bullet$ a bounded complex of finitely generated
projective $A$-modules such that $H^i(E^\bullet)$ are finitely generated
projective $A$-modules for all $i$. Then for any $A$-algebra $B$ and for any $i$,
the canonical homomorphism $B\otimes_A H^i(E^\bullet)\ra H^i(B\otimes_AE^\bullet)$
is an isomorphism.
\end{lem}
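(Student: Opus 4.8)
The plan is to reduce everything to the case of a single module placed in one degree, and then to the well-known fact that flatness (equivalently, local freeness over a noetherian ring) of a coherent module makes $\tor_q^A(B,-)$ vanish for $q>0$. First I would note that since $E^\bullet$ is a bounded complex of finitely generated projective $A$-modules, each $E^i$ is flat, so $B\otimes_A E^\bullet$ computes $\mathbf{L}(B\otimes_A-)$ applied to $E^\bullet$, i.e. there is a canonical isomorphism $B\otimes_A^{\mathbf L}E^\bullet\cong B\otimes_A E^\bullet$ in $D(B)$. Thus the claim is that the hypercohomology of the derived tensor product agrees with the naive tensor product of the cohomology.

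Next I would introduce the (stupid or canonical) truncation filtration, or rather argue by induction on the number of nonzero cohomology modules of $E^\bullet$. If $E^\bullet$ is quasi-isomorphic to a single finitely generated projective module $P$ in degree $m$, then because $P$ is flat we have $\tor_q^A(B,P)=0$ for $q>0$, so $B\otimes_A^{\mathbf L}P\cong B\otimes_A P$ concentrated in degree $m$; combined with the previous paragraph this gives $H^i(B\otimes_A E^\bullet)\cong H^i(B\otimes_A^{\mathbf L}E^\bullet)\cong B\otimes_A H^i(E^\bullet)$, and one checks the isomorphism is the canonical map. For the inductive step, let $m$ be the smallest index with $H^m(E^\bullet)\neq 0$; using the canonical truncation one gets a distinguished triangle $H^m(E^\bullet)[-m]\ra E^\bullet\ra \tau_{\geq m+1}E^\bullet\ra{}$ in $D(A)$, where $\tau_{\geq m+1}E^\bullet$ is again (quasi-isomorphic to) a bounded complex with fewer nonzero, finitely generated projective, cohomology modules. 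Applying $B\otimes_A^{\mathbf L}-$ and using that $B\otimes_A^{\mathbf L}(H^m(E^\bullet)[-m])\cong (B\otimes_A H^m(E^\bullet))[-m]$ by the flatness of $H^m(E^\bullet)$, the long exact cohomology sequence of the resulting triangle together with the inductive hypothesis for $\tau_{\geq m+1}E^\bullet$ and a diagram chase (the five lemma) yields the result. One subtlety is that $\tau_{\geq m+1}E^\bullet$ need not literally be a complex of projectives, but it is quasi-isomorphic to one — e.g. replace $E^\bullet$ in degrees $\leq m$ by a single projective surjecting onto $Z^m=B^m$ split off, or simply work in the derived category throughout since only the derived tensor product matters.

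Alternatively, and perhaps more cleanly, I would avoid truncations and argue directly via a spectral sequence or a two-step resolution argument: since $H^i(E^\bullet)$ is projective for every $i$, the complex $E^\bullet$ is \emph{split} in $D(A)$, i.e. quasi-isomorphic to the complex $\bigoplus_i H^i(E^\bullet)[-i]$ with zero differential. Indeed, one builds the quasi-isomorphism degree by degree: the short exact sequences $0\ra B^i\ra Z^i\ra H^i\ra 0$ split because $H^i$ is projective, and $0\ra Z^i\ra E^i\ra B^{i+1}\ra 0$ splits because $B^{i+1}$, being a submodule of the projective (hence — over a noetherian ring and finitely generated — flat) and itself a kernel of a map between projectives whose cokernel $E^{i+1}/Z^{i+1}$... is handled by downward induction showing each $B^{i+1}$ is projective. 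Once $E^\bullet\simeq\bigoplus_i H^i(E^\bullet)[-i]$ in $D(A)$, applying $B\otimes_A^{\mathbf L}-=B\otimes_A-$ (flat components) commutes with the finite direct sum and with the shifts, giving $B\otimes_A E^\bullet\simeq\bigoplus_i(B\otimes_A H^i(E^\bullet))[-i]$, from which $H^i(B\otimes_A E^\bullet)\cong B\otimes_A H^i(E^\bullet)$ is immediate; a final check confirms this agrees with the canonical map.

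The main obstacle is the bookkeeping needed to see that the boundaries $B^i$ (and hence $Z^i$) are themselves projective $A$-modules, so that all the relevant short exact sequences split and the splitting of $E^\bullet$ in $D(A)$ goes through; this is where the hypotheses ``bounded'' and ``$H^i(E^\bullet)$ finitely generated projective'' are really used, via a descending induction from the top nonzero degree. Everything else — commuting a flat base change past a finite direct sum and a shift, and identifying the resulting map with the canonical one — is routine. I expect the whole argument to take only a few lines once this splitting is in hand, which is why the paper calls it a standard result in homological algebra.
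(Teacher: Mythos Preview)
The paper does not actually prove this lemma: it is stated immediately after Proposition~\ref{2.5} with the remark that it ``is a standard result in homological algebra,'' and no argument is given. So there is nothing to compare against on the paper's side.

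Your proposal is correct. The second approach is the cleaner and standard one: by descending induction from the top nonzero degree one shows that all the cycle and boundary modules $Z^i$ and $B^i$ are projective (using the two short exact sequences $0\ra Z^i\ra E^i\ra B^{i+1}\ra 0$ and $0\ra B^i\ra Z^i\ra H^i\ra 0$, each of which splits once the rightmost term is known to be projective), whence $E^\bullet$ decomposes \emph{as a complex}, not merely in $D(A)$, into $\bigoplus_i H^i(E^\bullet)[-i]$ plus contractible two-term complexes $B^{i+1}\stackrel{\mathrm{id}}{\ra}B^{i+1}$. Tensoring with $B$ then preserves this decomposition and the contractibility of the two-term pieces, and the conclusion is immediate. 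Your first approach via canonical truncations also works but, as you note, involves a mild subtlety about replacing $\tau_{\geq m+1}E^\bullet$ by a complex of projectives; the splitting argument sidesteps this entirely. Incidentally, the noetherian hypothesis is not really used in either argument---projectivity of the terms and of the cohomology is all that matters---but this is harmless.
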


\section{The proof of the positive characteristic part}\label{S4}

By Remark \ref{5.2}, almost all of the statements in the positive characteristic
part of the main theorems are indeed proved in \cite{xie}, except that we have
only to remove the simplicial condition from Theorem \ref{1.3} of \cite{xie}.
First of all, we need the following relative vanishing result on toric varieties.

\begin{lem}\label{2.7}
Let $X$ be a projective simplicial toric variety over a perfect field $k$ of
characteristic $p>0$, $Y$ a normal projective variety over $k$, and $f:X\ra Y$
a surjective morphism. Let $H$ be a nef and big torus invariant $\Q$-divisor on $X$.
Then for any $j>0$,
\[ R^jf_*\OO_X(K_X+\ulcorner H\urcorner)=0. \]
\end{lem}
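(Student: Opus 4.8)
The plan is to reduce this relative statement to the absolute Kawamata--Viehweg vanishing for projective simplicial toric varieties in characteristic $p>0$, which is already available from \cite{xie} (see Remark \ref{5.2}). The key point is that $R^jf_*\OO_X(K_X+\ulcorner H\urcorner)$ is a coherent sheaf on $Y$, so to show it vanishes it suffices to show that $H^0$ of its twist by a sufficiently ample sheaf vanishes, and then use a Serre-type argument together with the Leray spectral sequence.

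Concretely, let $A$ be a very ample invertible sheaf on $Y$. First I would observe that $f^*A$ is a nef invertible sheaf on $X$ (semiample even, being the pullback of an ample sheaf), and that a torus invariant Weil divisor representing (a high multiple of) $f^*A$ can be arranged to be torus invariant. The crucial calculation is then that for $m\gg 0$, the $\Q$-divisor $H + m\,f^*A$ is again nef and big (nef as a sum of a nef and a nef class, big because $H$ is already big), and torus invariant; moreover $\ulcorner H + m f^*A\urcorner = \ulcorner H\urcorner + mf^*A$ since $mf^*A$ is an integral (Cartier) divisor. By the absolute Kawamata--Viehweg vanishing on the projective simplicial toric variety $X$ in characteristic $p$ (this is exactly Theorem \ref{1.3} of \cite{xie}, valid for torus invariant divisors), we get $H^i(X, K_X + \ulcorner H\urcorner + mf^*A)=0$ for all $i>0$ and all $m\gg 0$.

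Next I would run the Leray spectral sequence for $f$ applied to $\OO_X(K_X+\ulcorner H\urcorner)\otimes f^*A^{\otimes m}$, together with the projection formula $Rf_*(\OO_X(K_X+\ulcorner H\urcorner)\otimes f^*A^{\otimes m}) \cong Rf_*\OO_X(K_X+\ulcorner H\urcorner)\otimes A^{\otimes m}$. For $m\gg 0$, the higher direct images $R^jf_*\OO_X(K_X+\ulcorner H\urcorner)\otimes A^{\otimes m}$ have vanishing higher cohomology on $Y$ by Serre vanishing, so the spectral sequence degenerates to give $H^0(Y, R^jf_*\OO_X(K_X+\ulcorner H\urcorner)\otimes A^{\otimes m}) \hookrightarrow H^j(X, K_X+\ulcorner H\urcorner+mf^*A) = 0$ for every $j>0$. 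Hence $R^jf_*\OO_X(K_X+\ulcorner H\urcorner)\otimes A^{\otimes m}$ is a coherent sheaf on $Y$ all of whose global sections vanish while being globally generated for $m\gg 0$ (again Serre), which forces $R^jf_*\OO_X(K_X+\ulcorner H\urcorner)\otimes A^{\otimes m}=0$, and therefore $R^jf_*\OO_X(K_X+\ulcorner H\urcorner)=0$ for all $j>0$.

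The main obstacle I anticipate is the bookkeeping needed to keep everything torus invariant when forming $H + mf^*A$: one must check that $f^*A$ can be represented (after passing to a multiple) by a torus invariant Cartier divisor so that the absolute vanishing theorem of \cite{xie}, which is stated for torus invariant divisors, genuinely applies, and that the round-up interacts correctly, i.e. $\ulcorner H + mf^*A\urcorner = \ulcorner H\urcorner + mf^*A$. Provided $f$ is a toric morphism, or provided one first replaces $Y$ by the Stein factorization and notes that $f$ need not itself be toric but $f^*$ of a torus invariant ample class on $Y$ is still representable by a torus invariant divisor on $X$, this is routine; otherwise one can instead argue directly with an ample, not necessarily torus invariant, $A$ and invoke the fact that the relevant cohomology groups on $X$ are computed torus equivariantly, reducing to the torus invariant case by semicontinuity as in Proposition \ref{2.5}. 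Either way the geometric content is entirely carried by the already-established absolute Kawamata--Viehweg vanishing, and the rest is the standard Serre--Leray reduction.
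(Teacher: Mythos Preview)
Your approach is essentially identical to the paper's: pick an ample divisor on $Y$, choose $m\gg 0$ so that Serre vanishing and global generation hold for $R^jf_*\OO_X(K_X+\ulcorner H\urcorner)$ twisted by $\OO_Y(m\cdot\text{ample})$, degenerate the Leray spectral sequence, and feed in the absolute Kawamata--Viehweg vanishing from \cite{xie} on the simplicial toric $X$. The one point you overcomplicate is the torus invariance of $H+mf^*(\text{ample})$: on any toric variety every Cartier divisor class---in particular the pullback $mf^*L$---has a torus invariant representative, so no hypothesis on $f$ or $Y$ being toric (nor any Stein factorization or semicontinuity argument) is needed; the paper disposes of this in one line by noting that $H+mf^*L$ is linearly equivalent to a nef and big torus invariant $\Q$-divisor.
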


\begin{proof}
Take an ample divisor $L$ on $Y$ and a positive integer $m$ such that
$R^jf_*\OO_X(K_X+\ulcorner H\urcorner)\otimes\OO_Y(mL)$ is generated by its
global sections and $H^i(Y,R^jf_*\OO_X(K_X+\ulcorner H\urcorner)\otimes\OO_Y(mL))=0$
holds for any $i>0$ and any $j\geq 0$. Consider the following Leray spectral sequence:
\[ E_2^{ij}=H^i(Y,R^jf_*\OO_X(K_X+\ulcorner H\urcorner+mf^*L))
\Longrightarrow H^{i+j}(X,K_X+\ulcorner H\urcorner+mf^*L). \]
Since $E_2^{ij}=0$ for any $i>0$, we have $H^0(Y,R^jf_*\OO_X(K_X+\ulcorner H\urcorner+mf^*L))
\cong H^j(X,K_X+\ulcorner H\urcorner+mf^*L)$. Since $H+mf^*L$ is linearly equivalent to a nef
and big torus invariant $\Q$-divisor, we have that $H^j(X,K_X+\ulcorner H\urcorner+mf^*L)=0$
holds for any $j>0$, hence $H^0(Y,R^jf_*\OO_X(K_X+\ulcorner H\urcorner+mf^*L))=0$ holds
for any $j>0$. By global generation, we have $R^jf_*\OO_X(K_X+\ulcorner H\urcorner)\otimes\OO_Y(mL)=0$,
hence $R^jf_*\OO_X(K_X+\ulcorner H\urcorner)=0$ holds for any $j>0$.
\end{proof}

\begin{proof}[Proof of Theorems 1.1--1.3 over a perfect field $k$ of characteristic $p>0$.]
We have only to remove the simplicial condition from Theorem \ref{1.3} of \cite{xie}.

Let $X=X(\Delta,k)$. Take $\pi:X'\ra X$ be an equivariant birational morphism of toric varieties,
where $X'=X(\Delta',k)$, and $\Delta'$ is a subdivision of $\Delta$ by adding some faces of dimension
at least two such that each cone in $\Delta'$ is simplicial. Thus $X'$ is simplicial and $\pi:X'\ra X$
is isomorphic in codimension one.

Let $H'=\pi^*H$. Then $H'$ is also a nef and big torus invariant $\Q$-divisor on $X'$.
Since $\pi:X'\ra X$ is isomorphic in codimension one, we have $\pi_*\OO_{X'}(K_{X'}+\ulcorner H'\urcorner)
=\OO_X(K_X+\ulcorner H\urcorner)$. Consider the following Leray spectral sequence:
\[ E_2^{ij}=H^i(X,R^j\pi_*\OO_{X'}(K_{X'}+\ulcorner H'\urcorner))
\Longrightarrow H^{i+j}(X',K_{X'}+\ulcorner H'\urcorner). \]
Since $E_2^{ij}=0$ for any $j>0$ by Lemma \ref{2.7}, we have $H^i(X,K_X+\ulcorner H\urcorner)
\cong H^i(X',K_{X'}+\ulcorner H'\urcorner)$. By \cite[Theorem 1.3]{xie}, we have
$H^i(X',K_{X'}+\ulcorner H'\urcorner)=0$ for any $i>0$,
hence $H^i(X,K_X+\ulcorner H\urcorner)=0$ holds for any $i>0$.
\end{proof}

The following is a weak version of the relative Kawamata-Viehweg vanishing
on toric varieties in positive characteristic.

\begin{cor}\label{2.8}
Let $X$ be a projective toric variety over a perfect field $k$ of
characteristic $p>0$, $Y$ a normal projective variety over $k$, and $f:X\ra Y$
a surjective morphism. Let $H$ be a nef and big torus invariant $\Q$-divisor on $X$.
Then for any $j>0$,
\[ R^jf_*\OO_X(K_X+\ulcorner H\urcorner)=0. \]
\end{cor}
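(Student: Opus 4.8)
The plan is to reduce Corollary \ref{2.8} to the simplicial case already handled in Lemma \ref{2.7}, by exactly the same desingularization trick used in the proof of Theorems 1.1--1.3 above. First I would take an equivariant birational morphism $\pi:X'\ra X$ of toric varieties, where $X'=X(\Delta',k)$ and $\Delta'$ is a simplicial subdivision of $\Delta$ obtained by adding cones of dimension at least two, so that $\pi$ is an isomorphism in codimension one and $X'$ is a projective simplicial toric variety. Set $H'=\pi^*H$; this is again a nef and big torus invariant $\Q$-divisor on $X'$, and since $\pi$ is small we have $\pi_*\OO_{X'}(K_{X'}+\ulcorner H'\urcorner)=\OO_X(K_X+\ulcorner H\urcorner)$ and $R^j\pi_*\OO_{X'}(K_{X'}+\ulcorner H'\urcorner)=0$ for $j>0$ by Lemma \ref{2.7} applied to the morphism $\pi$ itself.

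Next I would consider the composite $f\circ\pi:X'\ra Y$. Since $X'$ is simplicial, Lemma \ref{2.7} applies directly to $f\circ\pi$ and yields $R^j(f\pi)_*\OO_{X'}(K_{X'}+\ulcorner H'\urcorner)=0$ for all $j>0$. Now I would feed this into the Leray spectral sequence
\begin{eqnarray*}
E_2^{ab}=R^af_*R^b\pi_*\OO_{X'}(K_{X'}+\ulcorner H'\urcorner)\Longrightarrow R^{a+b}(f\pi)_*\OO_{X'}(K_{X'}+\ulcorner H'\urcorner).
\end{eqnarray*}
Because $R^b\pi_*\OO_{X'}(K_{X'}+\ulcorner H'\urcorner)=0$ for $b>0$, the spectral sequence degenerates and gives $R^af_*\pi_*\OO_{X'}(K_{X'}+\ulcorner H'\urcorner)\cong R^a(f\pi)_*\OO_{X'}(K_{X'}+\ulcorner H'\urcorner)$ for every $a$. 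Combining with $\pi_*\OO_{X'}(K_{X'}+\ulcorner H'\urcorner)=\OO_X(K_X+\ulcorner H\urcorner)$, the right-hand side vanishing for $a>0$ transfers to $R^jf_*\OO_X(K_X+\ulcorner H\urcorner)=0$ for all $j>0$, which is the assertion.

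The only genuinely delicate point is the identity $\pi_*\OO_{X'}(K_{X'}+\ulcorner H'\urcorner)=\OO_X(K_X+\ulcorner H\urcorner)$ and its relative cohomology counterpart; these rest on $\pi$ being an isomorphism in codimension one, so that the reflexive sheaves $\OO_{X'}(K_{X'}+\ulcorner H'\urcorner)$ and $\OO_X(K_X+\ulcorner H\urcorner)$ are identified on the common big open subset where $\pi$ is an isomorphism, and on the vanishing of higher direct images from Lemma \ref{2.7}. Everything else is a formal spectral sequence manipulation, and no new characteristic-$p$ input beyond Lemma \ref{2.7} is required. I expect this step to be essentially the same calculation as in the proof of Theorems 1.1--1.3 just completed, so the corollary should follow with minimal extra work.
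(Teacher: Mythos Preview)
Your argument is correct, but it is not the route the paper takes. The paper's one-line proof reads: ``It follows from the positive characteristic part of Theorem \ref{1.3} and a similar argument to that of Lemma \ref{2.7}.'' In other words, having already established the \emph{absolute} Kawamata--Viehweg vanishing for an arbitrary (not necessarily simplicial) projective toric variety, the paper simply reruns the proof of Lemma \ref{2.7} verbatim: choose an ample $L$ on $Y$, twist by $mf^*L$ for $m\gg 0$, and use Serre vanishing together with the Leray spectral sequence for $f$ to force $R^jf_*\OO_X(K_X+\ulcorner H\urcorner)=0$; the required input $H^j(X,K_X+\ulcorner H+mf^*L\urcorner)=0$ now comes from the general Theorem \ref{1.3} rather than from its simplicial precursor.

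Your approach instead bypasses Theorem \ref{1.3} altogether and reduces the \emph{relative} statement directly to the simplicial relative statement (Lemma \ref{2.7}) via the small simplicial refinement $\pi:X'\to X$ and the Grothendieck spectral sequence for the composition $f\circ\pi$. This is exactly the desingularization trick the paper used to pass from simplicial to general in the absolute case, transplanted to the relative setting. It works cleanly because Lemma \ref{2.7} is applicable both to $\pi$ (target $X$ is normal projective) and to $f\circ\pi$ (target $Y$ is normal projective). The trade-off: your route is self-contained from Lemma \ref{2.7} and does not need Theorem \ref{1.3} as an intermediate step, while the paper's route is shorter to write once Theorem \ref{1.3} is in hand.
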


\begin{proof}
It follows from the positive characteristic part of Theorem \ref{1.3} and
a similar argument to that of Lemma \ref{2.7}.
\end{proof}

\section{The proof of the characteristic zero part}\label{S5}

Almost all of the arguments concerning the reduction modulo $p$ technique, 
except Theorem \ref{3.6}, come from \cite[\S 6]{il}.
Let $\{(A_i)_{i\in I},\,\,u_{ij}:A_i\ra A_j\,(i\leq j)\}$ be a filtered direct system
of rings with direct limit $A$, and denote by $u_i:A_i\ra A$ the canonical homomorphism.
The two most important examples are: (i) a ring $A$ written as a direct limit of its
sub-$\Z$-algebras of finite type; (ii) the localization $A_P$ of a ring $A$ at a prime
ideal $P$ written as a direct limit of localizations $A_f=A[1/f]$ for $f\not\in P$.

Let $\{(E_i)_{i\in I},\,\,v_{ij}:E_i\ra E_j\,(i\leq j)\}$ be a direct system of $A_i$-modules
(resp.\ $A_i$-algebras) with direct limit $E$, where $v_{ij}$ is an $A_i$-linear homomorphism
of $E_i$ into $E_j$ considered as an $A_i$-module (resp.\ $A_i$-algebra) via $u_{ij}$.
We say that $(E_i)_{i\in I}$ is cartesian if, for any $i\leq j$, the natural $A_j$-linear
homomorphism induced by $v_{ij}$
\[ \varphi_{ij}:u_{ij}^*E_i=A_j\otimes_{A_i}E_i\ra E_j,\,\,a_j\otimes e_i\mapsto a_jv_{ij}(e_i) \]
is an isomorphism of $A_j$-modules (resp.\ $A_j$-algebras). In this case, for any $i$,
the natural homomorphism $v_i:E_i\ra E$ induces an isomorphism $\varphi_i:u_i^*E_i
=A\otimes_{A_i}E_i\ra E$ of $A$-modules (resp.\ $A$-algebras).

Let $\{(F_i)_{i\in I},\,\,w_{ij}:F_i\ra F_j\,(i\leq j)\}$ be another direct system of
$A_i$-modules (resp.\ $A_i$-algebras) with direct limit $F$. If $(E_i)$ is cartesian,
then the $\Hom_{A_i}(E_i,F_i)$ form a direct system of $A_i$-modules: the transition
map for $i\leq j$ associated to $f_i:E_i\ra F_i$ is the homomorphism $E_j\ra F_j$
which is the following composition:
\[ E_j\stackrel{\varphi^{-1}_{ij}}{\longrightarrow} A_j\otimes_{A_i}E_i\stackrel{1\otimes f_i}{\longrightarrow}
A_j\otimes_{A_i}F_i\stackrel{\psi_{ij}}{\longrightarrow} F_j, \]
where $\psi_{ij}$ is induced by $w_{ij}$. Analogously, we have maps from $\Hom_{A_i}(E_i,F_i)$
to $\Hom_A(E,F)$, sending $f_i:E_i\ra F_i$ to the homomorphism $E\ra F$ which is
the following composition:
\[ E\stackrel{\varphi^{-1}_i}{\longrightarrow} A\otimes_{A_i}E_i\stackrel{1\otimes f_i}{\longrightarrow}
A\otimes_{A_i}F_i\stackrel{\psi_i}{\longrightarrow} F, \]
where $\psi_i$ is induced by $w_i:F_i\ra F$. Thus we have a natural map:
\begin{eqnarray}
\varinjlim\Hom_{A_i}(E_i,F_i)\ra \Hom_A(E,F). \label{es1}
\end{eqnarray}

Recall that a module (resp.\ an algebra) is said to be finitely presented if it is the
cokernel of a homomorphism between free modules of finite rank (resp.\ free algebras of
finite type). The next lemma shows that hypotheses of finite presentation guarantee that
the direct system $(E_i)$ is cartesian and the map (\ref{es1}) is isomorphic
from certain index $i_0\in I$.

\begin{lem}\label{3.1}
Let $(A_i)_{i\in I}$ be a filtered direct system of rings with direct limit $A$.

(i) If $E$ is a finitely presented $A$-module (resp.\ $A$-algebra), then there exists
$i_0\in I$ and an $A_{i_0}$-module (resp.\ $A_{i_0}$-algebra) $E_{i_0}$ of finite
presentation such that $u_{i_0}^*E_{i_0}\stackrel{\sim}{\longrightarrow} E$.

(ii) Let $(E_i)$ and $(F_i)$ be two direct systems, cartesian for $i\geq i_0$, with direct
limits $E$ and $F$ respectively. If $E_{i_0}$ is finitely presented, then the map
(\ref{es1}) is bijective.
\end{lem}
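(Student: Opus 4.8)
\textbf{Proof proposal for Lemma \ref{3.1}.}

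The plan is to prove part (i) first and then use it to bootstrap part (ii). For (i), suppose $E$ is a finitely presented $A$-module, so there is an exact sequence $A^m \xrightarrow{\alpha} A^n \ra E \ra 0$ where $\alpha$ is given by an $n\times m$ matrix with entries in $A$. Since $A = \varinjlim A_i$ is a filtered direct limit, each of the finitely many entries of $\alpha$ lifts to some $A_i$, and because $I$ is filtered I can find a single index $i_0$ and a matrix $\alpha_{i_0}$ over $A_{i_0}$ mapping to $\alpha$. Define $E_{i_0} := \mathrm{coker}(A_{i_0}^m \xrightarrow{\alpha_{i_0}} A_{i_0}^n)$, which is finitely presented over $A_{i_0}$. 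Applying the right-exact functor $u_{i_0}^* = A\otimes_{A_{i_0}}(-)$ to the presentation of $E_{i_0}$ and using that $u_{i_0}^*\alpha_{i_0} = \alpha$, I get $u_{i_0}^*E_{i_0} \cong \mathrm{coker}(\alpha) = E$. The algebra case is identical, replacing free modules of finite rank by polynomial rings in finitely many variables and noting that the finitely many relations likewise descend to a common index after enlarging $i_0$; filteredness is what makes all the finitely many descent choices compatible.

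For (ii), the key point is to reduce $\Hom$ to the module $E_{i_0}$ being a cokernel. After replacing $I$ by $I_{\geq i_0}$ I may assume $i_0$ is initial and $(E_i)$, $(F_i)$ are cartesian throughout. Since $E_{i_0}$ is finitely presented, choose a presentation $A_{i_0}^m \xrightarrow{\alpha} A_{i_0}^n \ra E_{i_0}\ra 0$; applying the cartesian (hence exact) base-change functors $u_{i_0 j}^*$ and $u_{i_0}^*$ yields presentations of $E_j$ over $A_j$ and of $E$ over $A$ with the same matrices $\alpha$. Then $\Hom_{A_i}(E_i,F_i)$ is the kernel of the map $\Hom_{A_i}(A_i^n,F_i)\ra\Hom_{A_i}(A_i^m,F_i)$, i.e. of $F_i^n \xrightarrow{\alpha^T} F_i^m$, and similarly $\Hom_A(E,F) = \ker(F^n\xrightarrow{\alpha^T} F^m)$. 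The map (\ref{es1}) is thereby identified with the canonical map $\varinjlim \ker(F_i^n\to F_i^m) \ra \ker(F^n\to F^m)$. Now invoke the exactness of filtered direct limits: $\varinjlim$ commutes with finite limits, so it commutes with the formation of this kernel, giving $\varinjlim \ker(F_i^n\to F_i^m) \cong \ker(\varinjlim F_i^n \to \varinjlim F_i^m) = \ker(F^n\to F^m)$, which is exactly what we want. For the algebra variant one repeats the argument with $E_{i_0}$ presented as a quotient of a polynomial algebra, and $\Hom$ of algebras becomes the equalizer of two maps determined by where the generators go subject to the relations, again a finite limit preserved by filtered colimits.

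The main obstacle is bookkeeping rather than conceptual: in (i) one must be careful that the single chosen matrix $\alpha_{i_0}$ can be arranged to have the correct image in $A$ (not just that each entry lifts somewhere), and in (ii) one must verify that the transition maps on $\Hom_{A_i}(E_i,F_i)$ described in the text via $\varphi_{ij}^{-1}$ and $\psi_{ij}$ agree, after the identification with $\ker(F_i^n\to F_i^m)$, with the maps induced simply by $w_{ij}$ on the $F_i$ factors — this uses the compatibility of the cartesian structure maps $\varphi_{ij}$ with the fixed presentation, i.e. that $\varphi_{ij}$ intertwines $u_{ij}^*\alpha$ and $\alpha$. Both are routine diagram chases once the presentations are fixed uniformly, and the essential input is just the exactness of filtered colimits together with the right-exactness of base change that produces the uniform presentations in the first place.
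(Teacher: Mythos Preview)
Your argument is correct and is the standard one: descend a finite presentation entrywise using filteredness for (i), and for (ii) identify $\Hom$ with a kernel via the fixed presentation and use exactness of filtered colimits. The paper itself does not supply a proof of this lemma; it is quoted from \cite[\S 6]{il} (as announced at the start of \S\ref{S5}), and the argument there is exactly the one you have written out, so there is nothing to compare beyond noting that you have reproduced the intended proof.
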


\begin{rem}\label{3.2}
It follows from Lemma \ref{3.1} that if $E$ is finitely presented, $E_{i_0}$ from which
$E$ arises by extension of scalars is essentially unique, in the sense that if $E_{i_1}$
is another choice (both $E_{i_0}$ and $E_{i_1}$ are of finite presentation), then there
exists $i_2$ with $i_2\geq i_0$ and $i_2\geq i_1$ such that $E_{i_0}$ and $E_{i_1}$ become
isomorphic by extension of scalars to $A_{i_2}$.
\end{rem}

The $S_i=\spec A_i$'s form an inverse system of affine schemes whose inverse limit is
$S=\spec A$. Let $\{(X_i)_{i\in I},\,\,v_{ij}:X_j\ra X_i\}$ be an inverse system of
$S_i$-schemes. We say that this system is cartesian for $i\geq i_0$ if, for any
$i_0\leq i\leq j$, the transition morphism $v_{ij}$ gives a cartesian square:
\[
\xymatrix{
X_j \ar[r]^{v_{ij}}\ar[d] & X_i \ar[d] \\
S_j \ar[r] & S_i.
}
\]
In this case, the $S$-scheme $X$ deduced from $X_{i_0}$ by extension of scalars to
$S$ is the inverse limit of $(X_i)_{i\in I}$. If $(Y_i)_{i\in I}$ is another inverse
system of $S_i$-schemes, cartesian for $i\geq i_0$, with inverse limit
$Y=S\times_{S_{i_0}} Y_{i_0}$, then the $\Hom_{S_i}(X_i,Y_i)$'s form a direct system,
and we have a natural map analogous to (\ref{es1}):
\begin{eqnarray}
\varinjlim\Hom_{S_i}(X_i,Y_i)\ra \Hom_S(X,Y). \label{es2}
\end{eqnarray}

Recall that a morphism of schemes $f:X\ra Y$ is said to be locally of finite presentation
if, there exists a covering of $Y$ by open affine subsets $V_i=\spec B_i$, such that for
each $i$, $f^{-1}(V_i)$ can be covered by open affine subsets $U_{ij}=\spec A_{ij}$,
where each $A_{ij}$ is a finitely presented $B_i$-algebra. If $Y$ is locally noetherian,
then ``locally of finite presentation'' is equivalent to ``locally of finite type''.
Furthermore, $f:X\ra Y$ is said to be of finite presentation if, it is locally of finite
presentation, quasi-compact and quasi-separated. If $Y$ is noetherian, then ``$f$ is of
finite presentation'' is equivalent to ``$f$ is of finite type''.

\begin{prop}\label{3.3}
Let $(S_i)_{i\in I}$ be a filtered inverse system of affine schemes with inverse limit $S$.

(i) If $X$ is an $S$-scheme of finite presentation, then there exists $i_0\in I$ and an
$S_{i_0}$-scheme $X_{i_0}$ of finite presentation from which $X$ is deduced by base change.

(ii) If $(X_i)_{i\in I}$ and $(Y_i)_{i\in I}$ are two inverse systems of $S_i$-schemes,
cartesian for $i\geq i_0$, and if $X_{i_0}$ and $Y_{i_0}$ are finitely presented over
$S_{i_0}$, then the map (\ref{es2}) is bijective.
\end{prop}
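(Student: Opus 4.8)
The plan is to reduce Proposition \ref{3.3} to the affine, module-theoretic statement of Lemma \ref{3.1} by a patching argument over a finite affine cover, exactly mirroring the way the ``locally of finite presentation'' conditions were phrased in the paragraph preceding the statement. Throughout I write $A_i = \Gamma(S_i,\OO_{S_i})$ and $A = \Gamma(S,\OO_S) = \varinjlim A_i$, which is legitimate since the $S_i$ are affine and the $S_i$-scheme maps are affine schemes so the inverse limit is $\spec A$.

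For part (i): since $X$ is of finite presentation over $S = \spec A$, it is in particular quasi-compact, so it admits a finite affine open cover $X = \bigcup_{\alpha=1}^r \spec C_\alpha$ with each $C_\alpha$ a finitely presented $A$-algebra. First I would apply Lemma \ref{3.1}(i) to each $C_\alpha$ to find an index $i_\alpha$ and a finitely presented $A_{i_\alpha}$-algebra $C_{\alpha,i_\alpha}$ with $A \otimes_{A_{i_\alpha}} C_{\alpha,i_\alpha} \cong C_\alpha$; passing to a common upper bound $i_1 \geq i_\alpha$ for all $\alpha$ (the system is filtered) and extending scalars, I get finitely presented $A_{i_1}$-algebras $C_{\alpha,i_1}$ descending all the charts simultaneously. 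The remaining task is to descend the gluing data. For each pair $(\alpha,\beta)$ the open immersion of $\spec C_\alpha \cap \spec C_\beta$ into both charts is recorded, after refining by finitely many distinguished opens, by localization maps and compatible $A$-algebra isomorphisms between the relevant localizations; since localizations of finitely presented algebras are finitely presented, Lemma \ref{3.1}(ii) lets me descend each such isomorphism to some $A_{i}$, and Remark \ref{3.2} guarantees that two descents of the same isomorphism agree after a further bound. The cocycle identities on triple overlaps are finitely many equalities among morphisms of finitely presented algebras, so by Lemma \ref{3.1}(ii) again they already hold after passing to a large enough index $i_0 \geq i_1$. Gluing the $\spec C_{\alpha,i_0}$ along these descended isomorphisms produces an $S_{i_0}$-scheme $X_{i_0}$ of finite presentation (quasi-compact and quasi-separated come for free from finiteness of the cover and the overlap data), and by construction $S \times_{S_{i_0}} X_{i_0} \cong X$.

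For part (ii): given the cartesian inverse systems $(X_i)$, $(Y_i)$ with $X_{i_0}$, $Y_{i_0}$ finitely presented over $S_{i_0}$, I would prove injectivity and surjectivity of \eqref{es2} separately. Injectivity: if two $S$-morphisms $X \ra Y$ come from $S_i$-morphisms $f_i, f'_i : X_i \ra Y_i$ that induce the same map on limits, then, locally on finite affine covers of $Y_{i_0}$ and of the preimages in $X_{i_0}$, the assertion becomes the injectivity part of the map \eqref{es1} applied to finitely presented algebra maps, which is Lemma \ref{3.1}(ii). Surjectivity: given $g : X \ra Y$ over $S$, cover $Y_{i_0}$ by finitely many affines, pull back to a finite affine cover of $X$ refining the preimages, and on each chart $g$ corresponds to an $A$-algebra homomorphism between finitely presented algebras; by Lemma \ref{3.1}(ii) each such homomorphism descends to some index, and after taking a common bound and checking the finitely many agreement conditions on overlaps (again finitely presented data, so Lemma \ref{3.1}(ii) applies), the descended local maps glue to an $S_i$-morphism $X_i \ra Y_i$ inducing $g$. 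In both arguments the cartesian hypothesis for $i \geq i_0$ is what allows the local pieces at level $i$ to be obtained from those at level $i_0$ purely by base change.

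The main obstacle is bookkeeping rather than any single hard idea: one must be careful that the affine cover of an overlap $\spec C_\alpha \cap \spec C_\beta$ inside $\spec C_\alpha$ need not be by distinguished opens, so one first refines to a common distinguished-open cover in both charts, and then the transition data really does consist of localization homomorphisms to which Lemma \ref{3.1}(ii) applies. A second delicate point is quasi-separatedness of the descended $X_{i_0}$: it follows because the pairwise overlaps $X_{i_0,\alpha} \cap X_{i_0,\beta}$ are themselves covered by finitely many affines coming from the descended overlap data, so $X_{i_0}$ is quasi-compact and quasi-separated, hence of finite presentation over $S_{i_0}$. Once these refinements are set up, every descent step is a direct invocation of Lemma \ref{3.1} together with the essential-uniqueness statement of Remark \ref{3.2}, and the proof is complete.
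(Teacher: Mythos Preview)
Your proposal is correct and follows exactly the route the paper indicates: the paper's entire proof is the single sentence ``It is reduced to Lemma \ref{3.1},'' and what you have written is a careful unpacking of that reduction via finite affine covers, descent of the charts by Lemma \ref{3.1}(i), and descent of the gluing data and cocycle conditions by Lemma \ref{3.1}(ii). Your discussion of the bookkeeping (refining overlaps to distinguished opens, quasi-separatedness of $X_{i_0}$) is precisely the standard justification behind that one-line reduction.
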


\begin{proof}
It is reduced to Lemma \ref{3.1}.
\end{proof}

\begin{rem}\label{3.4}
It follows from Proposition \ref{3.3} that if $X$ is finitely presented over $S$, $X_{i_0}$
from which $X$ arises by base change is essentially unique, in the sense that if $X_{i_1}$
is another choice (both $X_{i_0}$ and $X_{i_1}$ are of finite presentation), then there
exists $i_2$ with $i_2\geq i_0$ and $i_2\geq i_1$ such that $X_{i_0}$ and $X_{i_1}$ become
isomorphic by base change to $S_{i_2}$.
\end{rem}

\begin{prop}\label{3.5}
Let $(S_i)_{i\in I}$ be a filtered inverse system of affine schemes with inverse limit $S$,
and $X$ an $S$-scheme of finite presentation. As in Proposition \ref{3.3}(i), take an $i_0\in I$,
an $S_{i_0}$-scheme $X_{i_0}$ of finite presentation, and the induced cartesian inverse
system $(X_i)$ with $X_i=S_i\times_{S_{i_0}}X_{i_0}$ for $i\geq i_0$.

(i) If $E$ is a finitely presented $\OO_X$-module, then there exists an $i_1\geq i_0$ and an
$\OO_{X_{i_1}}$-module $E_{i_1}$ of finite presentation from which $E$ is deduced by extension
of scalars.

(ii) If $E$ is locally free (resp.\ locally free of rank $r$), then there exists an
$i_2\geq i_1$ such that $E_{i_2}=\OO_{X_{i_2}}\otimes_{\OO_{X_{i_1}}}E_{i_1}$ is locally free
(resp.\ locally free of rank $r$).

(iii) If $X$ is projective over $S$ and $E$ is an ample (resp.\ a very ample) invertible
$\OO_X$-module, then there exists an $i_2\geq i_1$ such that $X_{i_2}$ is projective over
$S_{i_2}$ and $E_{i_2}$ is ample (resp.\ very ample).

(iv) Let $E_{i_0}$ and $F_{i_0}$ be finitely presented $\OO_{X_{i_0}}$-modules, and consider
the cartesian direct systems $(E_i)$ and $(F_i)$ which are deduced by extension of scalars
over the $X_i$'s for $i\geq i_0$, as well as the $\OO_X$-modules $E$ and $F$ which are
deduced by extension of scalars over $X$. Then there is a natural map:
\begin{eqnarray}
\varinjlim_{i\geq i_0}\Hom_{\OO_{X_i}}(E_i,F_i)\ra \Hom_{\OO_X}(E,F), \label{es3}
\end{eqnarray}
which is bijective.
\end{prop}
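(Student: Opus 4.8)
The plan is to reduce everything to the affine case and then invoke Lemma \ref{3.1}, following the standard pattern for ``spreading out'' quasi-coherent sheaves. First I would treat part (i): since $X$ is of finite presentation over the affine scheme $S=\spec A$, it is quasi-compact and quasi-separated, so I can choose a finite affine open cover $X=\bigcup_{k} U_k$ with $U_k=\spec C_k$, where each $C_k$ is a finitely presented $A$-algebra. By Proposition \ref{3.3}(i) and its essential-uniqueness companion Remark \ref{3.4}, after increasing $i_0$ I may assume that this cover descends to a compatible affine open cover of each $X_i$ with $U_{k,i}=\spec C_{k,i}$, cartesian for $i\geq i_0$. On each $U_k$ the sheaf $E|_{U_k}$ corresponds to a finitely presented $C_k$-module $E_{(k)}$; by Lemma \ref{3.1}(i) there is an index and a finitely presented $C_{k,i}$-module descending it. The gluing data on overlaps $U_k\cap U_l$ (which are finite unions of affines, by quasi-separatedness) are isomorphisms of the finitely presented modules obtained by restriction, and by Lemma \ref{3.1}(ii) these isomorphisms, together with the cocycle condition on triple overlaps, descend to some finite index; choosing $i_1$ larger than all the finitely many indices so produced yields the desired $\OO_{X_{i_1}}$-module $E_{i_1}$ of finite presentation with $\OO_X\otimes_{\OO_{X_{i_1}}}E_{i_1}\cong E$.

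For part (iv), the natural map is constructed exactly as in the module case: given $\varphi_i\in\Hom_{\OO_{X_i}}(E_i,F_i)$ one base-changes along $X\to X_i$ using the cartesian isomorphisms $\OO_X\otimes_{\OO_{X_i}}E_i\xrightarrow{\sim}E$ and likewise for $F$. Bijectivity is checked on the finite affine cover: over $U_k$ the statement becomes precisely the bijectivity of $\varinjlim\Hom_{C_{k,i}}(E_{(k),i},F_{(k),i})\to\Hom_{C_k}(E_{(k)},F_{(k)})$, which is Lemma \ref{3.1}(ii) since $E_{(k),i}$ is finitely presented. Injectivity and surjectivity globalize because a morphism of sheaves is determined by, and can be prescribed by, its restrictions to the $U_k$ that agree on the finitely many overlaps, and agreement on overlaps is again controlled by Lemma \ref{3.1}(ii) applied over the (finitely many) affine pieces of $U_k\cap U_l$.

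Parts (ii) and (iii) then follow by standard limit arguments. For (ii): being locally free of rank $r$ is, locally on the finite affine cover, the condition that the finitely presented module $E_{(k),i}$ become free of rank $r$; since $E_{(k)}$ over $C_k=\varinjlim C_{k,i}$ is free of rank $r$, a chosen basis and the relations expressing that the presentation matrix has the right form involve finitely many elements of $C_k$, hence descend to some $C_{k,i_2}$; take $i_2$ larger than the finitely many indices arising from the finitely many $U_k$. For (iii): amplitude of an invertible sheaf $L$ on $X$ projective over the noetherian-in-spirit (more precisely, quasi-compact) base is equivalent to a statement about a single power $L^{\otimes m}$ being very ample and generating, i.e.\ to the existence of a closed immersion $X\hookrightarrow \PP^N_{S}$ pulling back $\OO(1)$ to $L^{\otimes m}$; such an immersion is a morphism of finitely presented $S$-schemes and hence, by Proposition \ref{3.3}(ii), descends to a morphism $X_{i_2}\to\PP^N_{S_{i_2}}$, which is a closed immersion at some further index (being a closed immersion is a finitely-presented-local condition on the target, cf.\ the descent of surjectivity of the defining ideal sheaf via part (iv) applied to $\OO_{\PP^N}\to \OO_X$). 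Increasing $i_2$ once more makes $X_{i_2}\to S_{i_2}$ projective and $E_{i_2}$ very ample (resp.\ ample, using that a sufficiently high power is very ample).

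The main obstacle is purely bookkeeping rather than conceptual: one must keep the affine cover, the gluing isomorphisms on double overlaps, and the cocycle relations on triple overlaps all cartesian simultaneously, invoking Lemma \ref{3.1}(i)--(ii) and the essential-uniqueness Remark \ref{3.2}/\ref{3.4} a bounded number of times and then passing to an index $i_2$ dominating all of them. The one genuine subtlety is part (iii): one must phrase ampleness in a way that is witnessed by finite data (a single very ample power defining a closed immersion into a fixed projective space) so that Proposition \ref{3.3} applies, and then recognize that ``closed immersion'' descends because the cokernel of $\OO_{\PP^N_{S_{i}}}\to f_{i*}\OO_{X_i}$ vanishing is, after pushing forward and using quasi-compactness, a finitely-presented vanishing statement handled by part (iv). Everything else is a routine compatibility check that I would not carry out in detail.
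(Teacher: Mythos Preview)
Your proposal is correct and follows essentially the same route as the paper: parts (i), (ii), and (iv) are reduced to Lemma~\ref{3.1} via a finite affine cover and gluing, and part (iii) is handled by passing to the very ample case and descending the closed immersion into a fixed $\PP^r_S$. The paper's proof is much terser but structurally identical; the only cosmetic difference is that the paper cites the bijectivity of (\ref{es2}) to spread out the closed-immersion property, whereas you phrase it as surjectivity of a map of structure sheaves controlled by (iv), and the paper uses (\ref{es3}) to descend the isomorphism $h^*\OO_{\PP}(1)\cong E$.
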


\begin{proof}
The proofs of (i), (ii) and (iv) are reduced to Lemma \ref{3.1}. For (iii), it suffices to
treat the case where $E$ is very ample, i.e.\ there is a closed immersion $h:X\ra \PP=\PP^r_S$
such that $h^*\OO_\PP(1)\cong E$. By the bijectivity of (\ref{es2}) and (ii), for $i$ sufficiently
large, we can reduce $h$ to an $S_i$-morphism $h_i:X_i\ra \PP_i=\PP^r_{S_i}$ and $E$ to an
invertible module $E_i$ over $X_i$. By the bijectivity of (\ref{es2}) again, up to increasing $i$,
we can obtain that $h_i$ is a closed immersion. By the bijectivity of (\ref{es3}),
up to increasing $i$, we can obtain that the isomorphism $h^*\OO_\PP(1)\cong E$ comes from
an isomorphism $h_i^*\OO_{\PP_i}(1)\cong E_i$. Thus $E_i$ is very ample.
\end{proof}

\begin{thm}\label{3.6}
Let $X$ be a proper toric variety over a field $K$ of characteristic zero, $D$ an
effective divisor, $L$ an ample invertible sheaf and $H$ an ample $\Q$-divisor on $X$.
Then there exists a sub-$\Z$-algebra $A$ of finite type of $K$, a proper toric variety $\XX$
over $S=\spec A$, an effective divisor $\DD$, an ample invertible sheaf $\LL$ and ample
$\Q$-divisor $\HH$ on $\XX$ such that $(X,D,L,H)$ are deduced from $(\XX,\DD,\LL,\HH)$
by base change.
\end{thm}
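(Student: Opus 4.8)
The plan is to realize the toric variety $X$ and all the auxiliary data as arising by base change from a model over a finitely generated $\Z$-subalgebra of $K$. First I would exploit the combinatorial rigidity of toric varieties: $X=X(\Delta,K)$ is built from the fan $\Delta$ by gluing the affine pieces $U_{(\sigma,K)}=\spec K[\sigma^\vee\cap M]$, and each $\sigma^\vee\cap M$ is a finitely generated monoid, so each $K[\sigma^\vee\cap M]$ is a finitely presented $K$-algebra. Since the gluing is along finitely many open immersions coming from the face relations in $\Delta$, only finitely many structural data are involved. In fact the most economical observation is that the entire toric variety $X(\Delta,K)$ is already defined over the prime ring: take $\XX_0=X(\Delta,\Z)$ (or $X(\Delta,\Z[1/N])$ after inverting whatever integers appear), and then $X=\XX_0\times_{\spec\Z}\spec K$. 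So before invoking any limit machinery one can simply declare $\XX=X(\Delta,A)$ for an appropriate $A$, and properness of $\XX$ over $S=\spec A$ follows from completeness of $\Delta$ exactly as properness of $X$ over $\spec K$ does (properness of toric morphisms is detected on the fan).

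Next I would bring in the remaining data $D$, $L$, $H$ via the spreading-out results of Section 3. Write $K$ as the filtered direct limit of its sub-$\Z$-algebras $A_i$ of finite type, with $S_i=\spec A_i$ and $S=\spec K$. Using Proposition \ref{3.3}(i) (or directly the toric model above) we fix $i_0$ and $\XX_{i_0}/S_{i_0}$ with $X=S\times_{S_{i_0}}\XX_{i_0}$, and form the cartesian system $(\XX_i)_{i\ge i_0}$. The invertible sheaf $L$ on $X$ is finitely presented, so by Proposition \ref{3.5}(i) it descends to an $\OO_{\XX_{i_1}}$-module $\LL_{i_1}$ for some $i_1\ge i_0$; by Proposition \ref{3.5}(ii), after increasing the index it is invertible; by Proposition \ref{3.5}(iii), after increasing the index again it is ample (and $\XX_{i}$ is projective over $S_i$, though for the statement we only need properness). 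For the divisors $D$ and $H$: a Weil divisor is a finite $\Z$-linear (resp.\ $\Q$-linear) combination of prime divisors, and each prime divisor is a closed integral subscheme of codimension one, which is finitely presented over $X$; so by Proposition \ref{3.3}(i) applied to the reduced closed subschemes, or by descending their ideal sheaves via Proposition \ref{3.5}(i), each component spreads out to a closed subscheme $\DD_i$ (resp.\ the components of $\HH_i$) over $\XX_i$, and after enlarging $i$ the generic fibre recovers $D$ (resp.\ $H$). Ampleness of $H$ as a $\Q$-divisor means some integral multiple $mH$ is an ample Cartier divisor; spreading out $\OO_X(mH)$ as an ample invertible sheaf via Proposition \ref{3.5}(iii) and then setting $\HH=\frac1m(mH)_i$ handles this.

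Finally I would take a single index $i$ dominating all of $i_0,i_1,\dots$ and all the finitely many enlargements above, set $A=A_i$, $\XX=\XX_i$, $\DD=\DD_i$, $\LL=\LL_i$, $\HH=\HH_i$, and check that $(X,D,L,H)=(\XX,\DD,\LL,\HH)\times_{S}\spec K$ componentwise, noting that base change commutes with the formation of $K_X$ since $\XX$ is toric (the canonical divisor is $-\sum_{\rho}D_\rho$, a specific torus-invariant Weil divisor determined by the fan) and with the Zariski sheaves $\wt\Omega^i$ by Lemma \ref{2.4}.

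The main obstacle is bookkeeping rather than conceptual: one must ensure that \emph{all} the finitely many conditions — $\XX$ proper over $S$, $\LL$ ample invertible, $\DD$ effective with the right generic fibre, $\HH$ an honest $\Q$-divisor that is ample and with the right generic fibre, and the compatibilities of $K_\XX$ and $\wt\Omega^\bullet_\XX$ with base change — hold over a \emph{common} index, and that each individual condition is of the ``constructible'' type that propagates after a finite extension of scalars. The toric structure is what makes each such condition manageable: ampleness, properness, and the canonical divisor are all read off the fan, so in practice one can even bypass the general limit arguments and simply take $A$ to be $\Z$ localized at the finitely many primes dividing the numerators and denominators of the rational coefficients appearing in $H$ and in a chosen very ample embedding, with $\XX=X(\Delta,A)$; the spreading-out machinery of Section 3 is then only needed to guarantee that the \emph{given} $D$, $L$, $H$ (not merely \emph{some} models) are recovered on the generic fibre.
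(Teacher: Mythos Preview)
Your proposal is correct and follows essentially the same strategy as the paper: take the explicit toric model $\XX=X(\Delta,A)$ (with properness read off the completeness of $\Delta$), then invoke the spreading-out machinery of Propositions~\ref{3.3} and~\ref{3.5} to descend $D$, $L$, and $H$ to a common finitely generated sub-$\Z$-algebra. The only cosmetic differences are that the paper descends $D$ via the surjection $\OO_X\to\OO_D$ rather than component-by-component, and handles $H$ by writing a very ample representative as $D'+\divisor_0(f)$ and absorbing the finitely many coefficients of $f$ into $A$, rather than via the ample line bundle $\OO_X(mH)$.
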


\begin{proof}
Write $K$ as a direct limit of the family $(A_i)_{i\in I}$ of its sub-$\Z$-algebras
of finite type. Assume that $X=X(\Delta,K)$ is associated to the fan $\Delta$. Then the
properness of $X$ is equivalent to the completeness of $\Delta$. Let $X_i=X(\Delta,A_i)$.
It is easy to see that $(X_i)_{i\in I}$ is a family of proper toric varieties over
$\spec A_i$ from which $X$ is deduced by base change. By Proposition \ref{3.3}, we could
obtain other choices of $X_i$ which may not be toric, however Remark \ref{3.4} shows
that any choice of $X_i$ is essentially unique, i.e.\ $X_i$ must be toric for $i$
sufficiently large.

The effective divisor $D$ is a closed subscheme of $X$, which corresponds to a surjective
homomorphism $\OO_X\ra \OO_D$. By the bijectivity of (\ref{es3}), up to increasing $i$, we can
reduce $\OO_X\ra \OO_D$ to a surjective homomorphism $\OO_{X_i}\ra \OO_{D_i}$ and take $\DD=D_i$.
The reduction of the ample invertible sheaf $L$ to $\LL$ follows from Proposition \ref{3.5}(iii).
For the reduction of the ample $\Q$-divisor $H$, it suffices to treat the case that $H$ is
a very ample divisor. Let $D$ be an effective divisor and $f$ be a rational function on $X$
such that $H=D+\divisor_0(f)$. Take a sufficiently large $i$ such that $A_i$ contains all
coefficients appearing in the rational function $f$. Then up to increasing $i$, we can reduce
$D$ to an effective divisor $\DD$ on $X_i$ and take $\HH=\DD+\divisor_0(f)$.
\end{proof}

We also need the following result on density of closed points.

\begin{prop}\label{3.7}
Let $S$ be a scheme of finite type over $\Z$.

(i) If $x$ is a closed point of $S$, then the residue field $k(x)$ is a finite field.

(ii) Any nonempty locally closed subset $Z$ of $S$ contains a closed point of $S$.
\end{prop}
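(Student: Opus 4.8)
The plan is to reduce both parts to a single algebraic input, the Nullstellensatz over $\Z$: \emph{a field which is finitely generated as a $\Z$-algebra is a finite field.} I would prove this by writing such a field as $E=\Z[a_1,\dots,a_r]$ and splitting on its characteristic. If $\ch E=q>0$, then $E$ is a finitely generated $\F_q$-algebra which happens to be a field, so Zariski's lemma gives that $E/\F_q$ is finite, whence $E$ is finite. If $\ch E=0$, then $\Q\subseteq E$ and $E$ is a finitely generated $\Q$-algebra which is a field, so by Zariski's lemma $E/\Q$ is a finite extension; letting $d$ be a common denominator of the coefficients of the minimal polynomials over $\Q$ of the finitely many $a_i$, each $a_i$ is integral over $\Z[1/d]$, so $E=\Z[1/d][a_1,\dots,a_r]$ is a finitely generated module over the noetherian ring $\Z[1/d]$, and hence so is its submodule $\Q$; but the denominators occurring in a finitely generated $\Z[1/d]$-submodule of $\Q$ involve only finitely many primes, whereas $1/\ell\in\Q$ for every prime $\ell\nmid d$, a contradiction. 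This arithmetic Nullstellensatz, and in particular the exclusion of characteristic zero, is where I expect essentially all of the content of the proposition to lie.

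For (i), the claim is local on $S$, so I would replace $S$ by an affine open neighbourhood $\spec A$ of $x$, with $A$ a finitely generated $\Z$-algebra. Then $x$ corresponds to a maximal ideal $\mathfrak m\subseteq A$, so $k(x)=A/\mathfrak m$ is a field that is finitely generated as a $\Z$-algebra, hence finite by the input above.

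For (ii), I would first record the criterion that, for $S$ of finite type over $\Z$, a point $x\in S$ is closed if and only if $k(x)$ is a finite field. The ``only if'' direction is (i). For ``if'', give $\overline{\{x\}}$ its reduced structure: it is integral with generic point $x$, so every nonempty affine open $\spec B\subseteq\overline{\{x\}}$ contains $x$, and $B$ is a domain whose fraction field is the finite field $k(x)$; a domain with finite fraction field is itself a (finite) field, so $\spec B=\{x\}$, and covering $\overline{\{x\}}$ by such opens yields $\overline{\{x\}}=\{x\}$, i.e. $x$ is closed in $S$. Now let $Z\subseteq S$ be nonempty and locally closed, equipped with the reduced induced subscheme structure; then $Z$ is a nonempty scheme of finite type over $\Z$, in particular noetherian and hence quasi-compact, so a standard Zorn's lemma argument on the poset of nonempty closed subsets of $Z$ ordered by reverse inclusion produces a point $z$ that is closed in $Z$. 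Applying (i) to $Z$ shows that $k(z)$ is finite, and then the criterion above shows that $z$ is closed in $S$ as well; since $z\in Z$, this proves (ii). The only routine point to verify along the way is that a locally closed subset of a noetherian scheme is again noetherian.
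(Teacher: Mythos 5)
Your argument is correct and complete. The paper itself disposes of Proposition~3.7 in one line, citing EGA~IV.10.4.6--10.4.7 for the general case and Matsumura~(14.L) for the affine case, so the ``paper's proof'' is really just a pointer to the standard literature on Jacobson schemes. What you have done is supply a self-contained proof of exactly the fact those references encode. The crux is, as you identify, the arithmetic Nullstellensatz: a field finitely generated as a $\Z$-algebra is finite. Your two-case proof of that is sound -- Zariski's lemma over $\F_q$ in positive characteristic, and in characteristic zero Zariski's lemma over $\Q$ followed by the observation that clearing denominators of the minimal polynomials makes $E$ a finite $\Z[1/d]$-module, forcing $\Q$ to be one too, which fails because infinitely many primes would then be bounded. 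Part (i) is then the affine reduction, and for (ii) your ``closed $\Leftrightarrow$ finite residue field'' criterion, combined with the noetherian minimality argument inside $Z$, gives the density of closed points; the one loose end you flag (that a locally closed subscheme of a finite-type $\Z$-scheme is again of finite type over $\Z$) is indeed routine, since locally closed immersions are of finite type. Net effect: the paper defers to the Jacobson-scheme machinery, while you rebuild the relevant piece of it from scratch; the content and the key lemma are the same, only the level of detail differs.
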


\begin{proof}
We refer to \cite[IV.10.4.6, IV.10.4.7]{ega}, or in the case where $S$ is affine, this is
a consequence of Hilbert's Nullstellensatz \cite[(14.L)]{ma80}.
\end{proof}

Finally, let us prove the characteristic zero part of the main theorems.

\begin{proof}[Proof of Theorem \ref{1.1}]
Assume that $X=X(\Delta,K)$ is associated to the fan $\Delta$. By Theorem \ref{3.6},
there exists a sub-$\Z$-algebra $A$ of finite type of $K$, a proper toric variety $\XX=X(\Delta,A)$
over $S=\spec A$, a reduced torus invariant Weil divisor $\DD$, and an ample invertible sheaf $\LL$
on $\XX$ such that $(X,D,L)$ are deduced from $(\XX,\DD,\LL)$ by the base change $\iota:\eta\inj S$,
where $\eta=\spec K$ is the generic point of $S$.

Let $\UU=X(\Delta_1,A)$, where $\Delta_1$ is the fan consisting of all 1-dimensional cones in $\Delta$.
Then $\codim_{\XX/S}(\XX-\UU)\geq 2$, and $\UU$ is smooth over $S$. Shrinking $\UU$ if necessary,
we can further assume that $\DD|_\UU$ is relatively simple normal crossing over $S$. Hence by
Definition \ref{2.3}, we obtain the Zariski sheaves $\wt{\Omega}^i_{\XX/S}(\log\DD)$.

Let $f:\XX\ra S$ be the structure morphism. For any $i\geq 0$,
consider the sheaf $\FF=\wt{\Omega}^i_{\XX/S}(\log\DD)\otimes\LL$ on $\XX$.
By the semicontinuity theorem \cite[Theorem III.12.8]{ha77}, the function
$h^j(s,\FF)=\dim_{k(s)}H^j(\XX_s,\FF_s)$ is an upper semicontinuity function
on $S$. In particular, the set $\{s\in S\,|\,h^j(s,\FF)<1\}$ is an open subset
of $S$. By Proposition \ref{3.7}, we can take a closed point $x\in S$ with
the residue field $k(x)$ being a finite field, which is perfect of positive
characteristic, such that $\XX_x=X(\Delta,k(x))$ is a toric variety over $k(x)$.
By the positive characteristic part of Theorem \ref{1.1},
we have $H^j(\XX_x,\FF_x)=0$ for any $j>0$, which implies that the above set
is nonempty, hence is a nonempty open subset of $S$ for any $j>0$. Thus the
generic point $\eta=\spec K$ belongs to this set. As a consequence,
$H^j(\XX_\eta,\FF_\eta)=H^j(X,\wt{\Omega}^i_X(\log D)\otimes L)=0$
holds for any $j>0$.
\end{proof}

\begin{proof}[Proof of Theorem \ref{1.2}]
Set $\dim_K H^j(X,\wt{\Omega}^i_X(\log D))=h^{ij}$, $\dim_K \mathbf{H}^n
(X,\wt{\Omega}^\bullet_X(\log D))=h^n$. It suffices to prove that for any $n$,
we have
\[ h^n=\sum_{i+j=n}h^{ij}. \]

Assume that $X=X(\Delta,K)$ is associated to the fan $\Delta$. By Theorem \ref{3.6},
there exists a sub-$\Z$-algebra $A$ of finite type of $K$, a proper toric variety $\XX=X(\Delta,A)$
over $S=\spec A$, and a reduced torus invariant Weil divisor $\DD$ on $\XX$ such that $(X,D)$ are deduced
from $(\XX,\DD)$ by the base change $\iota:\eta\inj S$, where $\eta=\spec K$ is the generic point of $S$.
By Proposition \ref{2.5}(i), up to replacing $A$ by $A[t^{-1}]$ for a suitable nonzero $t\in A$,
we may assume that the sheaves $R^jf_*\wt{\Omega}^i_{\XX/S}(\log\DD)$ and
$\mathbf{R}^nf_*\wt{\Omega}^\bullet_{\XX/S}(\log\DD)$ are free of constant rank,
which is necessarily equal to $h^{ij}$ and $h^n$ by Proposition \ref{2.5}(iii) and (iv).

By Proposition \ref{3.7}, we can take a closed point $x\in S$ with
the residue field $k(x)$ being a finite field, which is perfect of positive
characteristic, such that $\XX_x=X(\Delta,k(x))$ is a toric variety over $k(x)$.
By the positive characteristic part of Theorem \ref{1.2}, we have for any $n$,
\[
\sum_{i+j=n}\dim_{k(x)}H^j(\XX_x,\wt{\Omega}^i_{\XX_x}(\log \DD_x))=
\dim_{k(x)}\mathbf{H}^{n}(\XX_x,\wt{\Omega}^\bullet_{\XX_x}(\log \DD_x)).
\]
By Proposition \ref{2.5}(iii) and (iv) again, we have for any $i,j$ and for any $n$,
\[
\dim_{k(x)}H^j(\XX_x,\wt{\Omega}^i_{\XX_x}(\log \DD_x))=h^{ij},\,\,
\dim_{k(x)}\mathbf{H}^{n}(\XX_x,\wt{\Omega}^\bullet_{\XX_x}(\log \DD_x))=h^n.
\]
Therefore $h^n=\sum_{i+j=n}h^{ij}$ for any $n$, which implies the conclusion.
\end{proof}

\begin{proof}[Proof of Theorem \ref{1.3}]
By Kodaira's Lemma, we can take an effective torus invariant $\Q$-divisor $B$ with sufficiently small
coefficients such that $H-B$ is ample and $\ulcorner H-B\urcorner=\ulcorner H\urcorner$.
Therefore, we may from the beginning assume that $H$ is an ample torus invariant $\Q$-divisor on $X$.

Assume that $X=X(\Delta,K)$ is associated to the fan $\Delta$. By Theorem \ref{3.6},
there exists a sub-$\Z$-algebra $A$ of finite type of $K$, a proper toric variety $\XX=X(\Delta,A)$
over $S=\spec A$, and an ample $\Q$-divisor $\HH$ on $\XX$ such that $(X,H)$ are deduced from
$(\XX,\HH)$ by the base change $\iota:\eta\inj S$, where $\eta=\spec K$ is the generic point of $S$.

Let $f:\XX\ra S$ be the structure morphism. Consider the sheaf $\FF=\wt{\Omega}^n_{\XX/S}\otimes
\OO_{\XX}(\ulcorner\HH\urcorner)$ on $\XX$. By the semicontinuity theorem \cite[Theorem III.12.8]{ha77},
the function $h^j(s,\FF)=\dim_{k(s)}H^j(\XX_s,\FF_s)$ is an upper semicontinuity function
on $S$. In particular, the set $\{s\in S\,|\,h^j(s,\FF)<1\}$ is an open subset
of $S$. By Proposition \ref{3.7}, we can take a closed point $x\in S$ with
the residue field $k(x)$ being a finite field, which is perfect of positive
characteristic, such that $\XX_x=X(\Delta,k(x))$ is a toric variety over $k(x)$.
By the positive characteristic part of Theorem \ref{1.3},
we have $H^j(\XX_x,\FF_x)=0$ for any $j>0$, which implies that the above set
is nonempty, hence is a nonempty open subset of $S$ for any $j>0$. Thus the
generic point $\eta=\spec K$ belongs to this set. As a consequence,
$H^j(\XX_\eta,\FF_\eta)=H^j(X,K_X+\ulcorner H\urcorner)=0$
holds for any $j>0$.
\end{proof}

\small

\textsc{School of Mathematical Sciences, Fudan University,
Shanghai 200433, China}

\textit{E-mail address}: \texttt{qhxie@fudan.edu.cn}

\end{document}